\topmargin -1.5cm \textwidth 6in \textheight 8.5in
\documentclass[12pt]{amsart}

\usepackage{amsfonts}
\usepackage{tipa}
\usepackage{amssymb}
\usepackage{mathrsfs}
\usepackage{amsmath}
\usepackage{txfonts}
%\usepackage{amstex}
%Declaration section
\theoremstyle{plain}
\newtheorem{Thm}{Theorem}

\newtheorem{Cor}[Thm]{Corollary}
\newtheorem{Pro}[Thm]{Proposition}
\newtheorem{Lem}[Thm]{Lemma}

\newtheorem{Rk}[Thm]{Remark}

\newtheorem{Claim}{Claim}

%Command section
\errorcontextlines=0
%\numberwithin{equation}{section}

\begin{document}

%begin Topmatter
\title[Symmetry for Fractional Laplacian]{Radial symmetry results for fractional Laplacian systems}

\author[B. Liu]{Baiyu Liu}
\author[L. Ma]{Li Ma}

\address[B. Liu]{School of Mathematics and Physics\\
  University of Science and Technology Beijing \\
  30 Xueyuan Road, Haidian District
  Beijing, 100083\\
  P.R. China}
\email{liuby@ustb.edu.cn}

\address[L. Ma]{Department of mathematics \\
Henan Normal university \\
Xinxiang, 453007 \\
China}

\email{nuslma@gmail.com}
%\dedicatory{}
%\date{}

\keywords{fractional Laplacian system, method of moving planes, radial symmetry, decay solution}
\subjclass[2010]{35R11,35A01}
\thanks{$^*$ Project supported by  the National Natural Science Foundation of China (No. 11201025, No. 11271111), the Doctoral Program Foundation of the Ministry of Education of China (No. 20090002110019) and the Fundamental Research Funds for the Central Universities FRF-TP-15-037A3}

\begin{abstract}
In this paper, we generalize the direct method of moving planes for the fractional Laplacian to the system case. Considering a coupled nonlinear system with fractional Laplacian, we first establish a decay at infinity principle and a narrow region principle. Using these principles, we obtain two radial symmetry results for the decaying solutions of the fractional Laplacian systems. Our method can be applied to fractional Schr\"odinger systems and fractional H\'enon systems.
\end{abstract}

\maketitle

\section{Introduction}

In this paper, we are concerned with the symmetry properties of the following system
\begin{equation}\label{sys:fes}
\left\{
\begin{array}{ll}
(-\Delta)^{\frac{\alpha}{2}}u(x)=f(u,v), & x\in \mathbb{R}^n,
\\
(-\Delta)^{\frac{\alpha}{2}}v(x)=g(u,v), & x\in \mathbb{R}^n,
\\
u(x)>0, v(x)>0, & x\in \mathbb{R}^n.
\end{array}
\right.
\end{equation}
where $\alpha$ is any real number between $0$ and $2$.
Here,
$(-\Delta)^{\frac{\alpha}{2}}$ is the fractional Laplacian operator, which is a nonlocal pseudo-differential operator assuming the form
\begin{equation}
\label{def:fralap}
(-\Delta)^{\frac{\alpha}{2}}u(x)=C_{n,\alpha}\lim_{\epsilon\to 0}\int_{\mathbb{R}^n\backslash B_\epsilon(x)}\frac{u(x)-u(y)}{|x-y|^{n+\alpha}}dy.
\end{equation}
$C_{n,\alpha}$ is a normalization positive constant.
Let
$$
L_{\alpha}=\{u:\mathbb{R}^n\to \mathbb{R}|\int_{\mathbb{R}^n}\frac{|u(x)|}{1+|x|^{n+\alpha}}dx<+\infty \}$$ (see \cite{Sil2007}).
For
$u,v\in L_\alpha\cap C^{1,1}_{loc}(\mathbb{R}^n)$, the integrals on the left hand side of (\ref{sys:fes}) are well defined.

Recently, equations and systems involving the fractional Laplacian 
have been extensively studied, which have been used to model diverse physical phenomenas, such as anomalous diffusion, quasi-geostrophic flow, boundary control problems, and posudo-relativistic boson stars (see \cite{BG1990, CV2010, Con2006, FJL} and the references therein).

In this paper, we are interested in the radial symmetry result of system (\ref{sys:fes}).
When $\alpha=2$, the symmetry properties for solutions of system (\ref{sys:fes}), which is the classical semi-linear elliptic system, have been widely studied in a number of literatures (see \cite{FF1994,BS2000, ML2010,GNN1979,CL2010,LM2012,LM2013} and the references therein). A powerful method to obtain the symmetry of such kind of equations (and systems) is the method of moving planes, introduced by Alexandroff and improved by 
Berestycki and Nirenberg \cite{BN1991}.% It is essential that it should be used together with the maximum principle. 

As we see the fractional Laplacian (\ref{def:fralap}) is nonlocal, i.e. it does not act by pointwise differentiation but a global integral with respect to a singular kernel, that causes the main difficulty in studying problems involving it. Caffarelli and Silvestre \cite{CS2007} introduced an extension method (CS extension) to overcome this nonlocal difficulty. Their idea is to localize the fractional Laplacian by constructing a Dirichlet to Neumann operator of a degenerate elliptic equation. Using the CS extension, Brandle et al. \cite{BCPS2013} derived the non-existence of solution for 
\begin{equation}
\label{eq:fl}
(-\Delta)^{\alpha/2} u=u^p(x),\quad x\in \mathbb{R}^n,
\end{equation}
when $1\leq \alpha<2$ and $1<p<(n+\alpha)/(n-\alpha)$. In the same spirit, the symmetry among other properties of solutions for a wide kinds of fractional Laplacian equations have been obtained in \cite{CZ2014,QX2015,FW2012,CT2010}.

Another method to deal with the nonlocal difficulty is considering the corresponding equivalent integral equation of the fractional Laplacian equation. Using the integral form moving plane (or sphere) method \cite{CLO2006, CLO2005dcds, CLO2005cpde, Li2004, LM2008, MC06, JL2006, Yu2013} to treat the integral equation, one can obtain the symmetry properties of the fractional Laplacian equation. An alternative is developing new kinds of Maximum Principle and applying them to the nonlocal problem. We mention here the earlier work by Jarohs and Weth \cite{JW2016}, Felmer et al. \cite{FQT2012}, Felmer and Wang \cite{FW2014CCM} and Zhuo et al. \cite{ZCCY2016}. In \cite{FW2014CCM}, by using a form of Maximum Principle for domains with small measure together with the moving plane method, assuming that
\begin{enumerate}
	\item[(F1)]  $f$ is locally Lipschitz;
	\item[(F2)] there exist $s_0,\gamma,C>0$ such that
	$$
	\frac{f(v)-f(u)}{v-u}\leq (v+u)^{\gamma}, \quad \textrm{for all} \ 0<u<v<s_0,
	$$
\end{enumerate}
Felmer and Wang obtain the radial symmetry for positive decay solution of equation
\begin{equation*}%\label{eq:fw}
(-\Delta)^{\alpha/2}u=f(u),\ x\in \mathbb{R}^n,
\end{equation*}
provided that $u(x)=O(|x|^{-m})$, as $|x|\to \infty$ and 
$m>\max\{\frac{\alpha}{2\gamma},\frac{n}{\gamma+2}\}$.
In \cite{ZCCY2016}, Zhuo et al. 
considered the system 
\begin{equation}\label{sys:zhuo}
(-\Delta)^{\alpha/2}u_i(x)=f_i(u_1(x),\dots, u_m(x)),\ x\in \mathbb{R}^n,\ i=1,2,\dots, m,
\end{equation}
under assumption:
(f) the nonlinear terms $f_i$ are real-valued, non-negative, continuous, homogeneous of degree $1<\gamma<(n+\alpha)/(n-\alpha)$ and nondecreasing with respect to the variables $u_1,\dots, u_m$.
By establishing the equivalence between (\ref{sys:zhuo}) and an integral system and using properties of the integral system \cite{CL2009}, the authors of \cite{ZCCY2016} obtained the symmetry result and hence the non-existence of positive solutions for (\ref{sys:zhuo}). We remark that our results Theorem \ref{thm:main} and \ref{thm:rev} assume weaker monotonicity for the nonlinear terms and we do not assume the the nonlinear terms to be homogeneous. Moreover, our results can be applied to the supercritical case.

Recently, Chen, Li and Li \cite{CLL14} developed a direct method of moving planes to study the fractional Laplacian equation, which worked directly on the nonlocal operator. By showing radially symmetry for solution of (\ref{eq:fl}) with the direct method of moving plane, Chen et al.\cite{CLL14} proved that (\ref{eq:fl}) has no positive in $L_\alpha\cap C_{loc}^{1,1}(\mathbb{R}^n)$ in the subcritical case and the solution is radially symmetric in the critical case for all $\alpha\in (0,2)$.
The motivation of this paper is to generalize the direct method of moving planes \cite{CLL14} to the system case.

Here is the main result of this paper.
\begin{Thm}\label{thm:main}
	Let $(u,v)\in \left(L_{\alpha}\cap C^{1,1}_{loc}(\mathbb{R}^n)\right)^2$ be a positive solution of system (\ref{sys:fes}) with $f,g\in C^1([0,+\infty)\times [0,+\infty),\mathbb{R})$ and $\alpha\in (0,2)$.
	We suppose that
	\begin{eqnarray*}
	(i) & & u(x)\lesssim \frac{1}{|x|^a}, \quad v(x)\lesssim \frac{1}{|x|^b}, \quad \textrm{as}\ |x|\to \infty;\\
	(ii) & & \frac{\partial f}{\partial u}\lesssim u^{p-1}v^{q},\quad \frac{\partial g}{\partial v}\lesssim u^{r}v^{s-1}, \quad \textrm{as}\ (u,v)\to (0^+,0^+);\\
	(iii) & & \frac{\partial f}{\partial v}\lesssim u^{p}v^{q-1},\quad \frac{\partial g}{\partial u}\lesssim u^{r-1}v^{s},  \quad \textrm{as}\ (u,v)\to (0^+,0^+);\\
	(iv) & & \frac{\partial f}{\partial v}(u,v)>0,\quad \frac{\partial g}{\partial u}>0,\quad  \forall (u,v)\in  \mathbb{R}^+\times \mathbb{R}^+,
	\end{eqnarray*}
	where $a,b>0$; $p,q,r,s\geq 1$, and satisfying
	$$
	(v)\quad \alpha<\min\{ap+bq-a, ap+bq-b, ar+bs-a, ar+bs-b\}.
	$$
	Then there exists a point $x_0\in \mathbb{R}^n$ such that
	$$
	u(x)=u(|x-x_0|),\quad v(x)=v(|x-x_0|).
	$$
\end{Thm}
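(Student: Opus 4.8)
The plan is to run Chen--Li--Li's direct method of moving planes \cite{CLL14} on the pair $(u,v)$ along each coordinate direction, after first establishing a decay at infinity principle and a narrow region principle for the cooperative antisymmetric system solved by the differences. Fix a direction, say $e_1$, and for $\lambda\in\mathbb R$ put $T_\lambda=\{x_1=\lambda\}$, $\Sigma_\lambda=\{x_1<\lambda\}$, $x^\lambda=(2\lambda-x_1,x_2,\dots,x_n)$, and
\[U_\lambda(x)=u(x^\lambda)-u(x),\qquad V_\lambda(x)=v(x^\lambda)-v(x).\]
Since $u,v\in L_\alpha\cap C^{1,1}_{loc}$ and reflection preserves $L_\alpha$, the fractional Laplacians of $U_\lambda,V_\lambda$ are pointwise defined, these functions are antisymmetric across $T_\lambda$, and by (i) they vanish at infinity. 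Subtracting the two equations of system (\ref{sys:fes}) and applying the mean value theorem one variable at a time gives intermediate states at which
\[(-\Delta)^{\alpha/2}U_\lambda=c_{11}U_\lambda+c_{12}V_\lambda,\qquad (-\Delta)^{\alpha/2}V_\lambda=c_{21}U_\lambda+c_{22}V_\lambda,\]
where $c_{11},c_{12},c_{21},c_{22}$ are values of $\partial_uf,\partial_vf,\partial_ug,\partial_vg$ taken between the states at $x$ and at $x^\lambda$. By (iv) the off-diagonal coefficients are strictly positive, so the linearized system is fully coupled; and (ii)--(iii), together with (i) and the continuity and strict positivity of $u,v$ (so that $u(x^\lambda),v(x^\lambda)\to0$ exactly when $|x^\lambda|\to\infty$), show that, on the set where $U_\lambda<0$ or $V_\lambda<0$, all four coefficients decay faster than $|x|^{-\alpha}$; the four inequalities packaged in (v) are precisely what is needed here.

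First I would show that $U_\lambda\ge0$ and $V_\lambda\ge0$ in $\Sigma_\lambda$ for all sufficiently negative $\lambda$. If this fails, then since $U_\lambda,V_\lambda\to0$ at infinity the smaller of $\inf_{\Sigma_\lambda}U_\lambda$ and $\inf_{\Sigma_\lambda}V_\lambda$ is negative and is attained; say it equals $U_\lambda(\bar x)<0$ for some $\bar x\in\Sigma_\lambda$, so that $V_\lambda(\bar x)\ge U_\lambda(\bar x)$. At the minimum, the Chen--Li--Li computation exploiting the antisymmetry gives $(-\Delta)^{\alpha/2}U_\lambda(\bar x)\le C_0\,\mathrm{dist}(\bar x,T_\lambda)^{-\alpha}U_\lambda(\bar x)$, whereas the equation, together with $c_{12}(\bar x)>0$ and $V_\lambda(\bar x)\ge U_\lambda(\bar x)$, gives $(-\Delta)^{\alpha/2}U_\lambda(\bar x)\ge(c_{11}+c_{12})(\bar x)U_\lambda(\bar x)$; dividing by $U_\lambda(\bar x)<0$ forces $(c_{11}+c_{12})(\bar x)\ge C_0\,\mathrm{dist}(\bar x,T_\lambda)^{-\alpha}\gtrsim|\bar x|^{-\alpha}$, contradicting the coefficient decay. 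This is the decay at infinity principle, and the case in which $V_\lambda$ realizes the smaller infimum is symmetric, using $c_{21}>0$. Then I would set $\lambda_0=\sup\{\lambda: U_\mu\ge0\text{ and }V_\mu\ge0\text{ in }\Sigma_\mu\text{ for all }\mu\le\lambda\}$, which is finite, since $\lambda_0=+\infty$ would give $u(x^\lambda)\ge u(x)$ for every $x$ and all large $\lambda$, hence $0\ge u(x)>0$ on letting $\lambda\to+\infty$ and using (i).

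By continuity $U_{\lambda_0}\ge0$ and $V_{\lambda_0}\ge0$ in $\Sigma_{\lambda_0}$, and the strong maximum principle for the fully coupled antisymmetric system yields a dichotomy: if $U_{\lambda_0}$ vanished at an interior point $x_0$, the antisymmetry would force $(-\Delta)^{\alpha/2}U_{\lambda_0}(x_0)\le0$, with equality only when $U_{\lambda_0}\equiv0$, whereas the equation gives $(-\Delta)^{\alpha/2}U_{\lambda_0}(x_0)=c_{12}(x_0)V_{\lambda_0}(x_0)\ge0$, forcing $U_{\lambda_0}\equiv0$ and hence, since $c_{12}>0$, also $V_{\lambda_0}\equiv0$; the analogous argument with $c_{21}>0$ excludes $V_{\lambda_0}$ vanishing at an interior point unless both vanish identically. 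Hence either $U_{\lambda_0}\equiv V_{\lambda_0}\equiv0$, or both $U_{\lambda_0}>0$ and $V_{\lambda_0}>0$ throughout the interior of $\Sigma_{\lambda_0}$. In the second case I would contradict the maximality of $\lambda_0$: for small $\epsilon>0$, split $\Sigma_{\lambda_0+\epsilon}$ into a slab of width $\epsilon$ against $T_{\lambda_0+\epsilon}$, a large bounded piece, and the exterior; the narrow region principle excludes a negative minimum of $U_{\lambda_0+\epsilon}$ or $V_{\lambda_0+\epsilon}$ in the slab (there $\mathrm{dist}(\cdot,T_{\lambda_0+\epsilon})^{-\alpha}$ overwhelms the locally bounded coefficients), the decay at infinity principle excludes it in the exterior, and on the bounded piece $U_{\lambda_0},V_{\lambda_0}$ are bounded below by a positive constant, so $U_{\lambda_0+\epsilon},V_{\lambda_0+\epsilon}$ stay positive there by uniform continuity. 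Hence $U_{\lambda_0}\equiv V_{\lambda_0}\equiv0$: $u$ and $v$ are symmetric about $T_{\lambda_0}$. Since $e_1$ was arbitrary, $u$ and $v$ are symmetric with respect to some hyperplane orthogonal to each direction; running the planes from $+\infty$ shows this hyperplane is unique in each direction (two distinct parallel symmetry hyperplanes would make $u$ periodic, contradicting (i)), so a standard argument locates the common intersection point $x_0$ of all these hyperplanes, whence $u(x)=u(|x-x_0|)$ and $v(x)=v(|x-x_0|)$.

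The main obstacle is the coupling. One must control $U_\lambda$ and $V_\lambda$ simultaneously even though their negative minima may occur at different points; the device of always testing the equation of the component realizing the smaller infimum, together with the strict positivity of $c_{12},c_{21}$ coming from (iv), is what makes the decay at infinity and strong maximum principles go through for the system rather than a single equation. The companion technical difficulty is verifying that the linearization coefficients at the minimum point genuinely decay faster than $|x|^{-\alpha}$: they are evaluated at intermediate states involving both $x$ and $x^\lambda$, so one must use (i), the continuity and positivity of $u,v$ to force $|x^\lambda|\to\infty$, and a comparison of $|x^\lambda|$ with $|x|$ --- and it is exactly for this bookkeeping that the exponent conditions in (v) are calibrated.
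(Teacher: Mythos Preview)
Your plan is essentially correct and follows the same architecture as the paper's proof, but with two organizational differences worth noting. First, the paper's decay-at-infinity and narrow-region principles (Propositions~\ref{thm:decay} and~\ref{thm:nrp}) are stated as \emph{transfer inequalities}: if $U_\lambda$ attains a negative minimum at $x^*$ then $V_\lambda(x^*)<2U_\lambda(x^*)$, and symmetrically; the contradiction then comes from chaining both directions ($V_\lambda(x^*)<2U_\lambda(x^*)\le 2U_\lambda(y^*)<4V_\lambda(y^*)\le 4V_\lambda(x^*)$). Your device of testing the equation of the component with the \emph{smaller} infimum short-circuits this ping-pong and is cleaner. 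Second, the paper restricts to $\lambda\le 0$, defines $\lambda_0\le 0$, and in Step~2 argues by extracting sequences $\lambda_k\downarrow\lambda_0$, $x^k$, $y^k$ and a four-case analysis (bounded/unbounded for each), whereas your three-region split (narrow slab, compact core, exterior) is the more standard packaging and works just as well.

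There is, however, one genuine soft spot in your outline. You assert that on $\{U_\lambda<0\}\cup\{V_\lambda<0\}$ all four coefficients decay faster than $|x|^{-\alpha}$, and then use $(c_{11}+c_{12})(\bar x)=o(|\bar x|^{-\alpha})$ to close the contradiction. But $c_{12}(\bar x)=\partial_v f(u_\lambda(\bar x),\eta_1)$ involves $\eta_1$ lying between $v(\bar x)$ and $v_\lambda(\bar x)=v(\bar x^\lambda)$, and $\bar x^\lambda$ need not be far from the origin even when $|\bar x|$ is large (take $\bar x_1$ close to $2\lambda$); so without knowing $V_\lambda(\bar x)<0$ you cannot force $\eta_1$ small, assumption~(iii) may not even apply, and $c_{12}(\bar x)$ is merely bounded. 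The paper handles this by a two-step argument: using only the decay of $c_{11}$ (which \emph{is} controlled, since $\xi_1\le u(\bar x)$ and $v(\bar x)$ are both small) to first deduce $V_\lambda(\bar x)<0$, and only then bounding $c_{12}$. In your framework the simplest fix is a case split: if $V_\lambda(\bar x)\ge 0$ then $c_{12}V_\lambda(\bar x)\ge 0$, so $(-\Delta)^{\alpha/2}U_\lambda(\bar x)\ge c_{11}(\bar x)U_\lambda(\bar x)$ and the contradiction comes from $c_{11}$ alone; if $V_\lambda(\bar x)<0$ then $\eta_1\le v(\bar x)$ and your bound on $c_{12}$ is legitimate. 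With that adjustment your argument goes through.
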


\begin{Rk}
	If $a(x)$ and $b(x)$ are two functions, we use
	$a(x)\lesssim b(x)$ as $|x|\to 0^+$ to denote the statement that for
	$x>0$ sufficiently close to $0$, $a(x)\leq b(x)$.
\end{Rk}

An important consequence of our result is the radial symmetry for the fractional Schr\"odinger system as follows.
\begin{Cor}\label{col:ex1}
	Let $(u,v)\in \left(L_{\alpha}\cap C^{1,1}_{loc}(\mathbb{R}^n)\right)^2$ be a positive solution of system
	\begin{equation*}%\label{sys:fes2}
	\left\{
	\begin{array}{ll}
	(-\Delta)^{\frac{\alpha}{2}}u(x)=\sum_{i=1}^Nu^{p_i}v^{q_i}, & x\in \mathbb{R}^n,
	\\
	(-\Delta)^{\frac{\alpha}{2}}v(x)=\sum_{i=1}^Nu^{r_i}v^{s_i}, & x\in \mathbb{R}^n,
	\\
	u(x)>0, v(x)>0, & x\in \mathbb{R}^n,
	\end{array}
	\right.
	\end{equation*}
	where $\alpha\in (0,2)$ and $q_i,p_i,r_i,s_i\geq 1$, $i=1,2,\dots,N$.
	Let $p=\min_{1\leq i\leq N}\{p_i\}$, $q=\min_{1\leq i\leq N}\{q_i\}$, $r=\min_{1\leq i\leq N}\{r_i\}$ and $s=\min_{1\leq i\leq N}\{s_i\}$.
	Suppose that:\\
	$(i)$ $u(x)\lesssim |x|^{-a}, v(x)\lesssim |x|^{-b}$ $(a,b>0)$, as $|x|\to \infty$;\\
	$(v)$ $ \alpha<\min\{ap+bq-a, ap+bq-b, ar+bs-a, ar+bs-b\}$.
	
	Then there exists a point $x_0\in \mathbb{R}^n$ such that
	$$
	u(x)=u(|x-x_0|),\quad v(x)=v(|x-x_0|).
	$$
\end{Cor}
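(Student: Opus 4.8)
The plan is to recognize Corollary \ref{col:ex1} as a direct instance of Theorem \ref{thm:main}: one takes $f(u,v)=\sum_{i=1}^N u^{p_i}v^{q_i}$ and $g(u,v)=\sum_{i=1}^N u^{r_i}v^{s_i}$, together with the choices $p=\min_i p_i$, $q=\min_i q_i$, $r=\min_i r_i$, $s=\min_i s_i$, and then one only has to verify the hypotheses of that theorem. Conditions $(i)$ and $(v)$ are assumed verbatim in the corollary, so the work is to check the $C^1$ regularity of $f,g$ and conditions $(ii)$, $(iii)$, $(iv)$.

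First I would observe that, since each exponent $p_i,q_i,r_i,s_i$ is at least $1$, every monomial appearing in $f$ and $g$ is $C^1$ up to the boundary of $[0,+\infty)\times[0,+\infty)$: the partial derivatives $p_iu^{p_i-1}v^{q_i}$ and $q_iu^{p_i}v^{q_i-1}$ (and similarly for $g$) extend continuously to the axes because $p_i-1\ge0$ and $q_i-1\ge0$. Hence $f,g\in C^1([0,+\infty)^2,\mathbb{R})$ (and, incidentally, are nonnegative), so that $(u,v)$ solves a system of the form (\ref{sys:fes}).

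Next I would verify the decay estimates near the origin. For the first one,
$$
\frac{\partial f}{\partial u}(u,v)=\sum_{i=1}^N p_i\,u^{p_i-1}v^{q_i},
$$
and for $0<u,v<1$ one has $p_i-1\ge p-1\ge0$ and $q_i\ge q\ge1$, so $u^{p_i-1}\le u^{p-1}$ and $v^{q_i}\le v^{q}$; summing gives $\partial f/\partial u\le\bigl(\sum_i p_i\bigr)u^{p-1}v^{q}\lesssim u^{p-1}v^{q}$ as $(u,v)\to(0^+,0^+)$. The remaining estimates in $(ii)$ and $(iii)$ follow by the identical monomial-by-monomial comparison, using in turn $s_i\ge s\ge1$ and $r_i\ge r$ for $\partial g/\partial v$, then $q_i-1\ge q-1\ge0$ and $p_i\ge p$ for $\partial f/\partial v$, then $r_i-1\ge r-1\ge0$ and $s_i\ge s$ for $\partial g/\partial u$. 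For $(iv)$, for any $u,v>0$ each term of $\partial f/\partial v(u,v)=\sum_i q_iu^{p_i}v^{q_i-1}$ is strictly positive, so $\partial f/\partial v>0$ on $\mathbb{R}^+\times\mathbb{R}^+$, and likewise $\partial g/\partial u=\sum_i r_iu^{r_i-1}v^{s_i}>0$ there.

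With the $C^1$ regularity and conditions $(i)$--$(v)$ in place, Theorem \ref{thm:main} applies and yields the point $x_0$ with $u(x)=u(|x-x_0|)$ and $v(x)=v(|x-x_0|)$, which is the claim. There is no real obstacle in this argument; the only point deserving a moment's care is that the comparisons used in $(ii)$--$(iii)$ are one-sided and rely on the variables lying in $(0,1)$ near the origin, where a larger exponent produces a smaller value — and the standing assumption that all exponents are $\ge1$ is exactly what makes both that comparison and the boundary $C^1$-regularity valid.
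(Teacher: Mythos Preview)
Your approach is the paper's approach: set $f(u,v)=\sum_i u^{p_i}v^{q_i}$, $g(u,v)=\sum_i u^{r_i}v^{s_i}$ and verify the hypotheses of Theorem~\ref{thm:main}. The one substantive divergence is how the multiplicative constant in $(ii)$--$(iii)$ is handled. In this paper the symbol $\lesssim$ is defined (see the Remark immediately after Theorem~\ref{thm:main}) to mean a \emph{literal} inequality $\le$ near the limit point, not ``$\le C\cdot$''. Under that convention your step $\bigl(\sum_i p_i\bigr)u^{p-1}v^{q}\lesssim u^{p-1}v^{q}$ is false whenever $\sum_i p_i>1$, so condition $(ii)$ is not verified with the exponents $p,q$ themselves. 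The paper's remedy is to choose $0<p'<p$, $0<q'<q$, $0<r'<r$, $0<s'<s$ close enough to $p,q,r,s$ that the strict inequality in $(v)$ still holds with primed exponents; then for small $u,v$ one has $\bigl(\sum_i p_i\bigr)u^{p-1}v^{q}\le u^{p'-1}v^{q'}$ because the ratio $\bigl(\sum_i p_i\bigr)u^{p-p'}v^{q-q'}\to 0$, and Theorem~\ref{thm:main} is applied with $(p',q',r',s')$. Apart from this adjustment your verification of $C^1$ regularity and of $(iv)$ is exactly what is needed, and the conclusion follows.
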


\begin{Thm}\label{thm:rev}
	Let $(u,v)\in \left(L_{\alpha}\cap C^{1,1}_{loc}(\mathbb{R}^n)\right)^2$ be a positive solution of system (\ref{sys:fes}) with $f,g\in C^1([0,+\infty)\times [0,+\infty),\mathbb{R})$ and $\alpha\in (0,2)$.
	We suppose that
	\begin{eqnarray*}
		(i) & & u(x)\lesssim \frac{1}{|x|^a}, \quad v(x)\lesssim \frac{1}{|x|^b}, \quad \textrm{as}\ |x|\to \infty;\\
		(ii') & & \frac{\partial f}{\partial u}\lesssim u^{p-1},\quad \frac{\partial g}{\partial v}\lesssim v^{s-1}, \quad \textrm{as}\ (u,v)\to (0^+,0^+);\\
		(iii') & & \frac{\partial f}{\partial v}\lesssim v^{q-1},\quad \frac{\partial g}{\partial u}\lesssim u^{r-1},  \quad \textrm{as}\ (u,v)\to (0^+,0^+);\\
		(iv) & & \frac{\partial f}{\partial v}(u,v)>0,\quad \frac{\partial g}{\partial u}>0,\quad  \forall (u,v)\in  \mathbb{R}^+\times \mathbb{R}^+,
	\end{eqnarray*}
	where $a,b>0$; $p,q,r,s>1$, and satisfies
	$$
	(v')\quad \alpha<\min\{a(p-1), b(q-1), a(r-1), b(s-1)\}.
	$$
	Then there exists a point $x_0\in \mathbb{R}^n$ such that
	$$
	u(x)=u(|x-x_0|),\quad v(x)=v(|x-x_0|).
	$$
\end{Thm}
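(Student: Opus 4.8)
The plan is to adapt the direct method of moving planes of Chen, Li and Li \cite{CLL14} to the cooperative system satisfied by the reflected differences of $u$ and $v$, and then to run it in every coordinate direction. Fix a unit direction, say $e_1$, and for $\lambda\in\mathbb{R}$ set $\Sigma_\lambda=\{x\in\mathbb{R}^n:x_1<\lambda\}$, $T_\lambda=\partial\Sigma_\lambda$, $x^\lambda=(2\lambda-x_1,x_2,\dots,x_n)$, $u_\lambda(x)=u(x^\lambda)$, $v_\lambda(x)=v(x^\lambda)$, $U_\lambda=u_\lambda-u$, $V_\lambda=v_\lambda-v$. Since $(-\Delta)^{\alpha/2}$ commutes with the reflection, subtracting the equations in (\ref{sys:fes}) at $x$ and at $x^\lambda$ and using $f,g\in C^1$ yields, in $\Sigma_\lambda$,
$$(-\Delta)^{\alpha/2}U_\lambda=c_{11}U_\lambda+c_{12}V_\lambda,\qquad(-\Delta)^{\alpha/2}V_\lambda=c_{21}U_\lambda+c_{22}V_\lambda,$$
where $c_{11}(x)=\int_0^1\partial_uf(u+tU_\lambda,v+tV_\lambda)\,dt$, and $c_{12},c_{21},c_{22}$ are defined analogously with $\partial_vf,\partial_ug,\partial_vg$. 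Two features of this system drive the argument: by $(iv)$ the off-diagonal coefficients satisfy $c_{12}>0$ and $c_{21}>0$, so the system is cooperative; and by $(i)$, $(ii')$, $(iii')$ the coefficients decay at infinity — at points where $u,v$ and their reflections are near $(0^+,0^+)$ one has $c_{11}\lesssim u^{p-1}$, $c_{12}\lesssim v^{q-1}$, $c_{21}\lesssim u^{r-1}$, $c_{22}\lesssim v^{s-1}$, so by $(v')$ each entry is $o(|x|^{-\alpha})$ there.

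Step 1: starting the planes on the left. I claim that $U_\lambda\ge0$ and $V_\lambda\ge0$ in $\Sigma_\lambda$ whenever $\lambda$ is sufficiently negative. Since $u,v\to0$ at infinity, so do $U_\lambda,V_\lambda$, so if the claim fails the negative number $m:=\inf_{\Sigma_\lambda}\min\{U_\lambda,V_\lambda\}$ is attained at some $x^0\in\Sigma_\lambda$; say $U_\lambda(x^0)=m<0$, the other case being symmetric. Then $u((x^0)^\lambda)<u(x^0)$, and since $u$ is continuous and positive while $|x^0|>|\lambda|$, a value this small forces $|(x^0)^\lambda|$ to be large once $|\lambda|$ is large; hence $v((x^0)^\lambda)$ is small too by $(i)$, and all four coefficients at $x^0$ are $o(|x^0|^{-\alpha})$. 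Evaluating $(-\Delta)^{\alpha/2}U_\lambda$ at the negative minimum $x^0$ and comparing with the first equation of the system then yields a contradiction — this is exactly the decay at infinity principle — once $\lambda$ is negative enough.

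Step 2: moving the planes to the right. Put $\lambda_0=\sup\{\lambda:U_\mu\ge0\text{ and }V_\mu\ge0\text{ in }\Sigma_\mu\text{ for all }\mu\le\lambda\}$. Step 1 gives $\lambda_0>-\infty$, and $\lambda_0<+\infty$, since otherwise $u$ would be nondecreasing in $x_1$ on all of $\mathbb{R}^n$, incompatible with $u>0$ and $u(x)\to0$ as $x_1\to+\infty$. I claim $U_{\lambda_0}\equiv0\equiv V_{\lambda_0}$ in $\Sigma_{\lambda_0}$. By continuity $U_{\lambda_0},V_{\lambda_0}\ge0$ there; if they do not both vanish identically, the integral representation of $(-\Delta)^{\alpha/2}$ at an interior zero together with $c_{12},c_{21}>0$ gives the strong minimum alternative, so $U_{\lambda_0}>0$ and $V_{\lambda_0}>0$ throughout $\Sigma_{\lambda_0}$. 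I then push the plane past $\lambda_0$: for small $\varepsilon>0$ I want $U_{\lambda_0+\varepsilon}\ge0$ and $V_{\lambda_0+\varepsilon}\ge0$ in $\Sigma_{\lambda_0+\varepsilon}$. If not, the negative infimum of $\min\{U_{\lambda_0+\varepsilon},V_{\lambda_0+\varepsilon}\}$ is attained at some $x^0\in\Sigma_{\lambda_0+\varepsilon}$; the decay at infinity principle confines $x^0$ to some $\overline{B_{R_0}}$ with $R_0$ independent of $\varepsilon$, and the narrow region principle (the coefficients being bounded on bounded sets) forbids $x^0$ from the strip $\{\lambda_0-\delta<x_1<\lambda_0+\varepsilon\}$ once $\delta,\varepsilon$ are small, so $x^0$ lies in the compact set $K=\{x_1\le\lambda_0-\delta\}\cap\overline{B_{R_0}}\subset\Sigma_{\lambda_0}$. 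But $U_{\lambda_0},V_{\lambda_0}\ge c_0>0$ on $K$, hence $U_{\lambda_0+\varepsilon},V_{\lambda_0+\varepsilon}>0$ on $K$ for $\varepsilon$ small, contradicting the choice of $x^0$. Thus the plane can be moved further, contradicting the definition of $\lambda_0$; therefore $U_{\lambda_0}\equiv0\equiv V_{\lambda_0}$, i.e. $u$ and $v$ are symmetric about $T_{\lambda_0}$.

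Running this in an arbitrary direction shows that $u$ and $v$ possess, perpendicular to each direction, a hyperplane of symmetry, which is unique (two parallel symmetry hyperplanes would force periodicity, contradicting decay); by the standard argument (see \cite{CLL14} and the references therein) these hyperplanes pass through a common point $x_0$, and $u,v$ are radially symmetric about $x_0$, as asserted. The main obstacle throughout is the coupling: since $(iv)$ gives only cooperativity and no scalar sign structure, each maximum-principle step has to be carried out on the pair $(U_\lambda,V_\lambda)$, always working at the point where the joint negative infimum is attained and exploiting $c_{12},c_{21}>0$ there; and the delicate bookkeeping is in Step 1, where one must show that at a putative negative minimum far out the reflected point is also far out — so that \emph{both} components, hence all four coefficients, are genuinely small — and only then invoke $(v')$. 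It is precisely this decay-rate hypothesis, not any Sobolev-type restriction, that enters, which is why the supercritical range is allowed.
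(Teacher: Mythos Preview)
Your proposal is correct in outline and follows the same moving-planes strategy as the paper, but the technical implementation differs in two respects worth recording.

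First, you work with the \emph{joint} infimum $m=\inf_{\Sigma_\lambda}\min\{U_\lambda,V_\lambda\}$ and derive a contradiction at the single point where it is attained, whereas the paper treats the minima of $U_\lambda$ and $V_\lambda$ separately: at a negative minimum $x^*$ of $U_\lambda$ it proves $V_\lambda(x^*)<2U_\lambda(x^*)$, at a negative minimum $y^*$ of $V_\lambda$ it proves $U_\lambda(y^*)<2V_\lambda(y^*)$, and then chains the two to reach $1>4$. Both mechanisms are standard for cooperative systems; yours is shorter, the paper's is more modular (the same two inequalities feed both the decay principle and the narrow-region principle).

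Second, the paper's mean-value splitting is asymmetric: it writes $f(u_\lambda,v_\lambda)-f(u,v)=\partial_u f(\xi,v)U_\lambda+\partial_v f(u_\lambda,\eta)V_\lambda$, so that the second argument of $\partial_u f$ is exactly $v(x^*)$, automatically small when $|x^*|$ is large. Your integral form $c_{ij}=\int_0^1\partial_{\,\cdot}f(u+tU_\lambda,v+tV_\lambda)\,dt$ forces you to control $v_\lambda(x^0)$ as well, which is why you need the auxiliary step ``$u((x^0)^\lambda)$ small $\Rightarrow|(x^0)^\lambda|$ large $\Rightarrow v((x^0)^\lambda)$ small.'' That step is valid but only qualitative: it shows $v_\lambda(x^0)<\delta$, not $v_\lambda(x^0)\lesssim|x^0|^{-b}$, so your blanket assertion that ``all four coefficients at $x^0$ are $o(|x^0|^{-\alpha})$'' is not quite justified when $V_\lambda(x^0)>0$. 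This is easy to repair: in that case $c_{12}V_\lambda(x^0)\ge 0$ can simply be dropped, leaving $c_{11}\ge C_0|x^0|^{-\alpha}$, which contradicts $c_{11}\le u(x^0)^{p-1}\le|x^0|^{-a(p-1)}$ by $(v')$; only in the case $V_\lambda(x^0)\le 0$ do you need the bound on $c_{12}$, and there $\max(v,v_\lambda)=v(x^0)\le|x^0|^{-b}$ so $c_{12}\le|x^0|^{-b(q-1)}$ directly. With this small case split your Step~1 and the decay-at-infinity part of Step~2 go through, and the rest of the argument matches the paper's.
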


\begin{Rk}
	Theorem \ref{thm:rev} is a fractional Laplacian version of Theorem 1 in our early work \cite{ML2010}, which deals with the classic elliptic system. Also, it is a generalization of Theorem 1.2 in \cite{FW2014CCM} to the system case.
\end{Rk}

As an application, we apply Theorem \ref{thm:rev} to the following system and obtain the radial symmetry property for the decay solution.
\begin{Cor}\label{col:ex2}
	Let $(u,v)\in \left(L_{\alpha}\cap C^{1,1}_{loc}(\mathbb{R}^n)\right)^2$ be a positive solution of system
	\begin{equation*}%\label{sys:fes3}
	\left\{
	\begin{array}{ll}
	(-\Delta)^{\frac{\alpha}{2}}u(x)=v^\frac{n+\alpha}{n-\alpha}-\kappa_1 u^p, & x\in \mathbb{R}^n,
	\\
	(-\Delta)^{\frac{\alpha}{2}}v(x)=u^\frac{n+\alpha}{n-\alpha}-\kappa_2 v^s, & x\in \mathbb{R}^n,
	\\
	u(x)>0, v(x)>0, & x\in \mathbb{R}^n,
	\end{array}
	\right.
	\end{equation*}
	where $\alpha\in (0,2)$, $p,s>1$ and $\kappa_1,\kappa_2>0$.
	If $u(x)\lesssim |x|^{-(n-\alpha)}$ and  $v(x)\lesssim |x|^{-(n-\alpha)}$ as $|x|\to \infty$,
	then there exists a point $x_0\in \mathbb{R}^n$ such that
	$$
	u(x)=u(|x-x_0|),\quad v(x)=v(|x-x_0|).
	$$
\end{Cor}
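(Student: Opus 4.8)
The plan is to realize the system in Corollary~\ref{col:ex2} as an instance of (\ref{sys:fes}) whose nonlinearities satisfy every hypothesis of Theorem~\ref{thm:rev}, and then invoke that theorem on the given positive solution $(u,v)\in\left(L_\alpha\cap C^{1,1}_{loc}(\mathbb{R}^n)\right)^2$. Throughout we use $n>\alpha$, so that $\tfrac{n+\alpha}{n-\alpha}>1$ is meaningful. Set
\[
f(u,v)=v^{\frac{n+\alpha}{n-\alpha}}-\kappa_1 u^p,\qquad g(u,v)=u^{\frac{n+\alpha}{n-\alpha}}-\kappa_2 v^s .
\]
Every exponent occurring here, namely $\tfrac{n+\alpha}{n-\alpha}$, $p$, and $s$, exceeds $1$, so each of $t\mapsto t^{\frac{n+\alpha}{n-\alpha}},\,t^p,\,t^s$ is $C^1$ on $[0,+\infty)$ with vanishing derivative at $t=0$; hence $f,g\in C^1([0,+\infty)\times[0,+\infty),\mathbb{R})$. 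Differentiating,
\[
f_u=-\kappa_1 p\,u^{p-1},\quad f_v=\tfrac{n+\alpha}{n-\alpha}\,v^{\frac{2\alpha}{n-\alpha}},\quad g_u=\tfrac{n+\alpha}{n-\alpha}\,u^{\frac{2\alpha}{n-\alpha}},\quad g_v=-\kappa_2 s\,v^{s-1}.
\]

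Next I would fix the parameters of Theorem~\ref{thm:rev}. Take $a=b=n-\alpha$; then hypothesis (i) of Theorem~\ref{thm:rev} is exactly the decay $u(x)\lesssim|x|^{-(n-\alpha)}$, $v(x)\lesssim|x|^{-(n-\alpha)}$ assumed in the corollary. For the exponents, call those appearing in the theorem $\bar p,\bar q,\bar r,\bar s$ (to avoid a clash with the damping exponents $p,s$) and choose $\bar p=\bar q=\bar r=\bar s=\tfrac{n+\alpha}{n-\alpha}>1$. Then: for (ii$'$), $f_u=-\kappa_1 p\,u^{p-1}\le 0\le u^{\bar p-1}$ and $g_v=-\kappa_2 s\,v^{s-1}\le 0\le v^{\bar s-1}$ for $u,v>0$, so (ii$'$) holds because $f_u,g_v$ are nonpositive; for (iii$'$), $f_v=\tfrac{n+\alpha}{n-\alpha}\,v^{\frac{2\alpha}{n-\alpha}}=\tfrac{n+\alpha}{n-\alpha}\,v^{\bar q-1}\lesssim v^{\bar q-1}$ and likewise $g_u\lesssim u^{\bar r-1}$ as $(u,v)\to(0^+,0^+)$; for (iv), $f_v=\tfrac{n+\alpha}{n-\alpha}\,v^{\frac{2\alpha}{n-\alpha}}>0$ and $g_u=\tfrac{n+\alpha}{n-\alpha}\,u^{\frac{2\alpha}{n-\alpha}}>0$ on $\mathbb{R}^+\times\mathbb{R}^+$.

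It remains to check (v$'$), which is where the choice $a=b=n-\alpha$ together with the critical exponent pays off: each of $a(\bar p-1),\,b(\bar q-1),\,a(\bar r-1),\,b(\bar s-1)$ equals $(n-\alpha)\cdot\tfrac{2\alpha}{n-\alpha}=2\alpha>\alpha$, so $\alpha<\min\{a(\bar p-1),b(\bar q-1),a(\bar r-1),b(\bar s-1)\}$, which is (v$'$). With all hypotheses of Theorem~\ref{thm:rev} verified, that theorem yields a point $x_0\in\mathbb{R}^n$ with $u(x)=u(|x-x_0|)$ and $v(x)=v(|x-x_0|)$, as claimed. I do not expect a genuine obstacle here: the whole content sits in Theorem~\ref{thm:rev}, and the only point needing a moment's care is that the damping exponents $p,s$ in $\kappa_1 u^p$, $\kappa_2 v^s$ are otherwise unconstrained — this is harmless because those terms enter $f,g$ with a minus sign and only reinforce the nonpositivity of $f_u,g_v$ used in (ii$'$). (Should one wish to read $\lesssim$ as a literal $\le$ rather than up to a constant, one instead picks $\bar q,\bar r$ slightly below $\tfrac{n+\alpha}{n-\alpha}$ but still above $\tfrac{n}{n-\alpha}$; this interval is nonempty precisely because $\alpha>0$, and (v$'$) still holds.)
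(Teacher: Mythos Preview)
Your proof is correct and follows essentially the same approach as the paper: realize the system as an instance of Theorem~\ref{thm:rev} with $a=b=n-\alpha$ and suitable exponents, then invoke that theorem. Since the paper's Remark defines $\lesssim$ as a literal $\leq$ near the limit (not ``up to a constant''), your parenthetical adjustment---choosing $\bar q,\bar r\in\bigl(\tfrac{n}{n-\alpha},\tfrac{n+\alpha}{n-\alpha}\bigr)$ and any $\bar p,\bar s>\tfrac{n}{n-\alpha}$---is exactly the parameter choice the paper makes in its (much terser) proof.
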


In the end, using the direct method of moving plane for systems, we find the symmetry for solutions of fractional H\'enon system. Although similar result is known (see \cite{DZ2015}), we include it here as an application of our method.
\begin{Thm}\label{thm:henon}
	Let $(u,v)\in \left(L_{\alpha}\cap C^{1,1}_{loc}(\mathbb{R}^n)\right)^2$ be a positive solution of system
	\begin{equation}\label{sys:fes3}
	\left\{
	\begin{array}{ll}
	(-\Delta)^{\frac{\alpha}{2}}u(x)=|x|^{\sigma_1}v^q, & x\in \mathbb{R}^n,
	\\
	(-\Delta)^{\frac{\alpha}{2}}v(x)=|x|^{\sigma_2}u^r, & x\in \mathbb{R}^n,
	\\
	u(x)>0, v(x)>0, & x\in \mathbb{R}^n,
	\end{array}
	\right.
	\end{equation}
	where $\alpha\in (0,2)$, $q,r>\frac{n+\alpha}{n-\alpha}$ and 
	$
	\sigma_1=(n-\alpha)q-(n+\alpha),\quad \sigma_2=(n-\alpha)r-(n+\alpha).
	$
	If 
	$$(vi)\ \lim_{|x|\to \infty}|x|^{n-\alpha}u(x)=+\infty,\quad  \lim_{|x|\to \infty}|x|^{n-\alpha}v(x)=+\infty,$$ then $u(x)$ and $v(x)$ are radially symmetric about the origin.
\end{Thm}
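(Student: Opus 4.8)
The strategy is to reduce to the translation-invariant case by a Kelvin transform and then run the direct method of moving planes for the resulting cooperative system, using the decay at infinity and narrow region principles established above, with the origin singled out by a singularity.

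First I introduce the Kelvin transforms $\bar u(x)=|x|^{-(n-\alpha)}u\!\left(x/|x|^2\right)$ and $\bar v(x)=|x|^{-(n-\alpha)}v\!\left(x/|x|^2\right)$. Using the conformal identity $(-\Delta)^{\alpha/2}\bar u(x)=|x|^{-(n+\alpha)}\big[(-\Delta)^{\alpha/2}u\big]\!\left(x/|x|^2\right)$ together with $\sigma_1=(n-\alpha)q-(n+\alpha)$ and $\sigma_2=(n-\alpha)r-(n+\alpha)$, a direct computation cancels all powers of $|x|$ and shows that on $\mathbb{R}^n\setminus\{0\}$
\[
(-\Delta)^{\alpha/2}\bar u(x)=\bar v(x)^{q},\qquad (-\Delta)^{\alpha/2}\bar v(x)=\bar u(x)^{r}.
\]
Since $u,v\in C^{1,1}_{loc}$ are locally bounded, $\bar u,\bar v$ are $C^{1,1}_{loc}(\mathbb{R}^n\setminus\{0\})$, they lie in $L_\alpha$ (integrability near $0$ following from $u,v\in L_\alpha$ after the substitution $y=x/|x|^2$), and $\bar u(x),\bar v(x)\lesssim |x|^{-(n-\alpha)}\to0$ as $|x|\to\infty$; crucially, the slow-decay hypothesis (vi) becomes $\bar u(x),\bar v(x)\to+\infty$ as $x\to0$, so $\bar u$ and $\bar v$ have a genuine singularity precisely at the origin.

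Next I fix a direction, say $e_1$, and for $\lambda\le0$ set $\Sigma_\lambda=\{x:x_1<\lambda\}$, $x^\lambda=(2\lambda-x_1,x_2,\dots,x_n)$, $U_\lambda(x)=\bar u(x^\lambda)-\bar u(x)$, $V_\lambda(x)=\bar v(x^\lambda)-\bar v(x)$ on $\Sigma_\lambda$. Since $(-\Delta)^{\alpha/2}$ commutes with reflections, $U_\lambda,V_\lambda$ satisfy on $\Sigma_\lambda$ the cooperative linear system $(-\Delta)^{\alpha/2}U_\lambda(x)=q\,\xi(x)^{q-1}V_\lambda(x)$ and $(-\Delta)^{\alpha/2}V_\lambda(x)=r\,\eta(x)^{r-1}U_\lambda(x)$, where $\xi(x)$ lies between $\bar v(x)$ and $\bar v(x^\lambda)$ and $\eta(x)$ between $\bar u(x)$ and $\bar u(x^\lambda)$, so the coupling coefficients are positive — this is the role played by hypothesis (iv) for the general system. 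The only point needing care is the origin: for $\lambda<0$ one has $0\notin\Sigma_\lambda$ while its reflection $2\lambda e_1\in\Sigma_\lambda$, and near $2\lambda e_1$ the terms $\bar u(x^\lambda)$ and $\bar v(x^\lambda)$ blow up while $\bar u(x),\bar v(x)$ remain bounded, so $U_\lambda,V_\lambda\to+\infty$ there; combined with $U_\lambda,V_\lambda\to0$ at infinity this confines any negative infimum of $\min\{U_\lambda,V_\lambda\}$ to an interior point of $\Sigma_\lambda$ away from $2\lambda e_1$, where the decay at infinity and narrow region principles apply exactly as in the proof of Theorem \ref{thm:main}.

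With this set-up the argument is the standard one. First, the decay at infinity principle gives $U_\lambda\ge0$ and $V_\lambda\ge0$ in $\Sigma_\lambda$ for all $\lambda$ sufficiently negative; set $\Lambda=\sup\{\lambda\le0:U_\mu\ge0,\ V_\mu\ge0\text{ in }\Sigma_\mu\text{ for all }\mu\le\lambda\}$. If $\Lambda<0$, then by the strong maximum principle for the cooperative fractional system either $U_\Lambda\equiv V_\Lambda\equiv0$ — impossible, since this would make $\bar u$ symmetric about $\{x_1=\Lambda\}$ and hence produce a second singular point at $2\Lambda e_1\neq0$ — or $U_\Lambda>0,\ V_\Lambda>0$ in the interior of $\Sigma_\Lambda$, in which case the narrow region principle near $\partial\Sigma_\lambda$ together with the decay at infinity principle far out shows $U_\lambda,V_\lambda\ge0$ for $\lambda$ slightly larger than $\Lambda$, contradicting the definition of $\Lambda$. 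Hence $\Lambda=0$ (and $\Lambda\le0$ automatically, since for $\lambda>0$ the origin lies in $\Sigma_\lambda$, where $\bar u$ blows up while $\bar u(x^\lambda)$ is bounded near $0$, so $U_\lambda<0$ near $0$). Running the same argument from $+\infty$ gives the reverse inequalities at $\lambda=0$, so $\bar u$ and $\bar v$ are symmetric about $\{x_1=0\}$; since $e_1$ was an arbitrary direction, $\bar u$ and $\bar v$ are radially symmetric about the origin, and undoing the Kelvin transform yields the radial symmetry of $u$ and $v$ about the origin. I expect the main obstacle to be the bookkeeping at the origin: checking that $(\bar u,\bar v)$ solves the inverted system up to the singularity and belongs to $L_\alpha$, and verifying at each step that the interior singularity of $x\mapsto\bar u(x^\lambda)$ and $x\mapsto\bar v(x^\lambda)$ at $2\lambda e_1$ neither conceals a negative minimum nor disturbs the maximum principle — which is precisely where hypothesis (vi) is used.
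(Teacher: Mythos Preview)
Your proposal is correct and follows essentially the same route as the paper: Kelvin transform to kill the $|x|^{\sigma_i}$ weights, then run the moving-plane argument of Theorem~\ref{thm:main} on the transformed system, using the singularity of $(\bar u,\bar v)$ at the origin (guaranteed by hypothesis~(vi)) both to confine any negative minimum away from $0^\lambda$ and to rule out $U_{\lambda_0}\equiv0$ when $\lambda_0<0$. The only cosmetic difference is that the paper excludes $U_{\lambda_0}\equiv0$ by observing directly that $U_{\lambda_0}\to+\infty$ near $0^{\lambda_0}$, whereas you phrase it as symmetry about $\{x_1=\Lambda\}$ forcing a spurious second singularity---these are the same observation.
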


\begin{Rk}
In \cite{DZ2015},
by showing (\ref{sys:fes3}) is equivalent to some integral system, Dou and Zhou have proved the radially symmetric result as in Theorem \ref{thm:henon}, under the assumption that $u\in L_{loc}^a(\mathbb{R}^n)$, $v\in L_{loc}^b (\mathbb{R}^n)$, where
$a=n(r-1)/\alpha$ and $b=n(q-1)/\alpha$.
\end{Rk}

The paper is organized as follows. We devote Section \ref{sec:pre} to introduce some preliminaries results, including the decay at infinity principle and the narrow region principle for the coupled fractional Laplacian system. We prove Theorem \ref{thm:main} in Section \ref{sec:pfthmmain}. Finally the proofs of Theorem \ref{thm:rev}, Corollary \ref{col:ex1}, \ref{col:ex2} and Theorem \ref{thm:henon} will be presented in Section \ref{sec:cor}. Note that in the following, $c$ and $C$ will be constants which can be different from line to line.

\section{Preliminary}\label{sec:pre}

In what follows, we shall use the method of moving planes.
Choose any direction to be the $x_1$ direction. For each $\lambda\in \mathbb{R}^n$, we write $x=(x_1,x')$ with $x'=(x_2,\dots, x_n)\in \mathbb{R}^{n-1}$ and define
$\Sigma_\lambda:=\{x\in \mathbb{R}^n|x_1<\lambda\}$, $T_\lambda:=\partial \Sigma_\lambda=\{x\in\mathbb{R}^n|x_1=\lambda\}$. For each point $x=(x_1,x')\in \Sigma_\lambda$, let $x^\lambda=(2\lambda-x_1,x')$ be the reflected point with respect to the hyperplane $T_\lambda$. 
Define the reflected functions by $u_\lambda(x)=u(x^\lambda)$, $v_\lambda(x)=v(x^\lambda)$ and introduce functions
$$
U_\lambda (x)=u_\lambda(x)-u(x), \quad V_\lambda (x)=v_\lambda(x)-v(x).
$$

According to the fact that for all $f\in C_{loc}^{1,1}(\mathbb{R}^n)\cap L_\alpha$,
$$((-\Delta)^{\frac{\alpha}{2}}f_\lambda )(x)=((-\Delta)^{\frac{\alpha}{2}}f)(x^\lambda),
$$
it follows that for $x\in \Sigma_\lambda$
\begin{eqnarray}
\label{eq:bigu}
(-\Delta)^{\frac{\alpha}{2}} U_\lambda(x)
 &=& ((-\Delta)^{\frac{\alpha}{2}}  u)(x^\lambda)-((-\Delta)^{\frac{\alpha}{2}}u)(x)\\
\nonumber &=& f(u(x^\lambda),v(x^\lambda))-f(u(x),v(x))\\
\nonumber &=& \frac{\partial f}{\partial u}(\xi_1(x,\lambda),v(x))U_\lambda(x)+\frac{\partial f}{\partial v}(u_\lambda(x),\eta_1(x,\lambda))V_\lambda(x),
\end{eqnarray}
and 
\begin{equation}
\label{eq:bigv}
(-\Delta)^{\frac{\alpha}{2}} V_\lambda(x) =
\frac{\partial g}{\partial u}(\xi_2(x,\lambda),v_\lambda(x))U_\lambda(x)+\frac{\partial g}{\partial v}(u(x),\eta_2(x,\lambda))V_{\lambda}(x),
\end{equation}
where $\xi_i(x,\lambda)$ is between $u_\lambda(x)$ and $
u(x)$, $\eta_i(x,\lambda)$ is between $v_\lambda(x)$ and $
v(x)$, $i=1,2$.

We define 
$$
\Sigma_\lambda^{U-}:=\{x\in\Sigma_\lambda |U_{\lambda}(x)<0\}, \quad 
\Sigma_\lambda^{V-}:=\{x\in\Sigma_\lambda |V_{\lambda}(x)<0\}.
$$

Without the classic maximum principle, we shall introduce the following two principles, which will play an important role in the proof of Theorem \ref{thm:main}.

\begin{Pro}[Decay at Infinity Principle]\label{thm:decay}
	Let $(u,v)\in \left(L_{\alpha}\cap C^{1,1}_{loc}(\mathbb{R}^n)\right)^2$ is a positive solution of system (\ref{sys:fes}) with $f,g\in C^1([0,+\infty)\times [0,+\infty),\mathbb{R})$.
Under assumptions $(i)-(v)$,
there exists a constant $R_0>0$, for all system (\ref{eq:bigu})(\ref{eq:bigv}) with $\lambda\leq 0$, 
\begin{enumerate}
	\item[(a)] if there is $x^*\in \Sigma_\lambda$, $|x^*|>R_0$ such that $U_\lambda(x^*)=\min_{x\in\overline{\Sigma_{\lambda}}} U_{\lambda}(x)<0$, then 
	$$
	V_{\lambda}(x^*)<2U_{\lambda}(x^*)<0;
	$$
	\item[(b)]  if there is $y^*\in \Sigma_\lambda$, $|y^*|>R_0$ such that $V_\lambda(y^*)=\min_{y\in \overline{\Sigma_{\lambda}}} V_{\lambda}(y)<0$, then 
	$$U_{\lambda}(y^*)<2V_{\lambda}(y^*)<0.$$
\end{enumerate}
\end{Pro}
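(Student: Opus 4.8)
The plan is to exploit the nonlocal representation of $(-\Delta)^{\alpha/2}$ evaluated at the point where a negative minimum is attained, together with the decay hypotheses $(i)$, $(ii)$, $(iii)$ and the gap condition $(v)$. Suppose $x^*\in\Sigma_\lambda$ with $|x^*|>R_0$ realizes $U_\lambda(x^*)=\min_{\overline{\Sigma_\lambda}}U_\lambda<0$. First I would write, using the defining integral \eqref{def:fralap} and splitting $\mathbb{R}^n=\Sigma_\lambda\cup(\Sigma_\lambda)^c$ and reflecting the integral over $(\Sigma_\lambda)^c$ across $T_\lambda$,
\begin{equation*}
(-\Delta)^{\frac{\alpha}{2}}U_\lambda(x^*)=C_{n,\alpha}\,\mathrm{P.V.}\int_{\Sigma_\lambda}\Big(\frac{1}{|x^*-y|^{n+\alpha}}-\frac{1}{|x^*-y^\lambda|^{n+\alpha}}\Big)\big(U_\lambda(x^*)-U_\lambda(y)\big)\,dy,
\end{equation*}
which is the standard device from \cite{CLL14}. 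Since $x^*$ is a global minimum of $U_\lambda$ on $\overline{\Sigma_\lambda}$ we have $U_\lambda(x^*)-U_\lambda(y)\le 0$, and the kernel difference is strictly positive for $x^*,y\in\Sigma_\lambda$; dropping all but a fixed ball's worth of the integral and using $|x^*-y^\lambda|\le |x^*-y|+\text{(something controlled)}$ one gets the one-sided bound
\begin{equation*}
(-\Delta)^{\frac{\alpha}{2}}U_\lambda(x^*)\le C_{n,\alpha}\,U_\lambda(x^*)\int_{\{|x^*-y|>1\}}\Big(\frac{1}{|x^*-y|^{n+\alpha}}-\frac{1}{|x^*-y^\lambda|^{n+\alpha}}\Big)dy\le -\,\frac{c}{|x^*|^{\alpha}}\,\big(-U_\lambda(x^*)\big),
\end{equation*}
for $|x^*|$ large, exactly as in the scalar decay-at-infinity lemma.

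Next I would feed this into the PDE identity \eqref{eq:bigu}. At $x^*$ we have $(-\Delta)^{\alpha/2}U_\lambda(x^*)=\frac{\partial f}{\partial u}(\xi_1,v)U_\lambda(x^*)+\frac{\partial f}{\partial v}(u_\lambda,\eta_1)V_\lambda(x^*)$. Because $\lambda\le 0$ and $x^*\in\Sigma_\lambda$, the reflected point $x^{*\lambda}$ is ``further out'', so by $(i)$ both $u(x^*),u(x^{*\lambda}),v(x^*),v(x^{*\lambda})$ are small when $|x^*|>R_0$; hence $\xi_1,\eta_1$ are small and $(ii)$ gives $\frac{\partial f}{\partial u}(\xi_1,v)\lesssim \xi_1^{p-1}v^q\lesssim |x^*|^{-a(p-1)}|x^*|^{-bq}=|x^*|^{-(ap+bq-a)}$, while $(iii)$ gives $\frac{\partial f}{\partial v}(u_\lambda,\eta_1)\lesssim |x^*|^{-(ap+bq)+?}$; in any case, by $(v)$ each of these decays strictly faster than $|x^*|^{-\alpha}$. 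Combining the lower bound $(-\Delta)^{\alpha/2}U_\lambda(x^*)\le -c|x^*|^{-\alpha}(-U_\lambda(x^*))$ with the identity and moving the $U_\lambda(x^*)$ term to the left,
\begin{equation*}
\Big(\frac{c}{|x^*|^\alpha}-\frac{\partial f}{\partial u}(\xi_1,v)\Big)\big(-U_\lambda(x^*)\big)\le \frac{\partial f}{\partial v}(u_\lambda,\eta_1)\,V_\lambda(x^*),
\end{equation*}
and for $R_0$ large the coefficient on the left is $\ge \tfrac{c}{2}|x^*|^{-\alpha}$, which forces $V_\lambda(x^*)<0$ and moreover $V_\lambda(x^*)\le \frac{c}{2|x^*|^{\alpha}}\big/\frac{\partial f}{\partial v}(u_\lambda,\eta_1)\cdot U_\lambda(x^*)$. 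Since by $(v)$ and the decay estimates $\frac{\partial f}{\partial v}(u_\lambda,\eta_1)/\big(\tfrac{c}{2}|x^*|^{-\alpha}\big)\to 0$, after enlarging $R_0$ once more this ratio is $<\tfrac12$, giving $V_\lambda(x^*)<2U_\lambda(x^*)<0$, which is $(a)$. Part $(b)$ is identical with the roles of $(u,f,U)$ and $(v,g,V)$ interchanged, using $\frac{\partial g}{\partial v}$, $\frac{\partial g}{\partial u}$ and the $ar+bs$ members of $(v)$.

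The main obstacle, and the place requiring the most care, is making the comparison of the two decay rates $|x^*|^{-\alpha}$ (coming from the nonlocal operator) against the nonlinear coefficients rigorous and uniform in $\lambda\le 0$: one must check that the lower bound on $-(-\Delta)^{\alpha/2}U_\lambda(x^*)$ can be taken with a constant $c$ independent of $\lambda$ (this uses $\lambda\le 0$ so that $x^*$ and the relevant integration region stay on one side and the kernel-difference integral is bounded below by a $\lambda$-independent multiple of $|x^*|^{-\alpha}$), and that the smallness of $\xi_i,\eta_i$ is likewise controlled purely by $|x^*|>R_0$ and not by $\lambda$. A secondary subtlety is the principal-value/local regularity justification that $(-\Delta)^{\alpha/2}U_\lambda(x^*)$ is well defined and that the minimum-point inequality may be applied — this is handled exactly as in \cite{CLL14} since $u,v\in L_\alpha\cap C^{1,1}_{loc}$. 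I expect the single choice of $R_0$ to be obtained by taking the max of finitely many thresholds: one to make the kernel estimate valid, one from each of the four exponent inequalities in $(v)$, and one to push each ratio below $\tfrac12$.
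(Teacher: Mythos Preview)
Your overall strategy matches the paper's---evaluate $(-\Delta)^{\alpha/2}U_\lambda$ at the negative minimum, extract a term of size $c|x^*|^{-\alpha}U_\lambda(x^*)$, feed it into \eqref{eq:bigu}, and compare decay rates using $(i)$--$(v)$. Two points of execution, however, are not correct as written.

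First, the reflected identity you wrote is off. Using the \emph{anti}-symmetry $U_\lambda(y^\lambda)=-U_\lambda(y)$, the split over $\Sigma_\lambda$ and its reflection gives
\[
(-\Delta)^{\alpha/2}U_\lambda(x^*)=C_{n,\alpha}\,\mathrm{P.V.}\int_{\Sigma_\lambda}\frac{U_\lambda(x^*)-U_\lambda(y)}{|x^*-y|^{n+\alpha}}\,dy+C_{n,\alpha}\int_{\Sigma_\lambda}\frac{U_\lambda(x^*)+U_\lambda(y)}{|x^*-y^\lambda|^{n+\alpha}}\,dy,
\]
from which (replacing $|x^*-y|$ by $|x^*-y^\lambda|$ in the first integral) one obtains $(-\Delta)^{\alpha/2}U_\lambda(x^*)\le 2C_{n,\alpha}U_\lambda(x^*)\int_{\Sigma_\lambda}|x^*-y^\lambda|^{-(n+\alpha)}dy$. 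Your kernel-difference expression differs from the true value by precisely $2C_{n,\alpha}U_\lambda(x^*)\int_{\Sigma_\lambda}|x^*-y^\lambda|^{-(n+\alpha)}dy$, and without this term you only get $\le 0$, not the quantitative $c|x^*|^{-\alpha}U_\lambda(x^*)$ you need.

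Second, the assertion that ``the reflected point $x^{*\lambda}$ is further out'' is false in general (e.g.\ $\lambda=-1$, $x^*=(-10,0,\dots,0)$ gives $|x^{*\lambda}|=8<10$), so you cannot bound $v(x^{*\lambda})$, and hence $\eta_1$, directly by $|x^*|$. The paper's argument is a two-step bootstrap: from $U_\lambda(x^*)<0$ one has $0<u_\lambda(x^*)<\xi_1<u(x^*)\lesssim|x^*|^{-a}$, which is enough to bound $\partial f/\partial u(\xi_1,v(x^*))$ via $(ii)$ and conclude $V_\lambda(x^*)<0$; only \emph{then} does $0<v_\lambda(x^*)<\eta_1<v(x^*)\lesssim|x^*|^{-b}$ hold, permitting the estimate on $\partial f/\partial v(u_\lambda,\eta_1)$ via $(iii)$ and the final ratio $>2$. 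Your write-up collapses these two stages into one based on an incorrect geometric claim.
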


\begin{proof}
Let us assume initially that there is some $x^*\in \Sigma_{\lambda}$ satisfying $$U_\lambda(x^*)=\min_{x\in \overline{\Sigma_\lambda}}U_\lambda(x)<0.$$

Using the anti-symmetry property of $U_\lambda(x)$, we compute
\begin{eqnarray*}
(-\Delta)^{\frac{\alpha}{2}}U_\lambda(x^*) &=& C_{n,\alpha}P.V. \int_{\mathbb{R}^n}\frac{U_\lambda(x^*)-U_\lambda(y)}{|x^*-y|^{n+\alpha}}dy\\
&=&  C_{n,\alpha}P.V.\left( \int_{\Sigma_\lambda}\frac{U_\lambda(x^*)-U_\lambda(y)}{|x^*-y|^{n+\alpha}}dy +\int_{\mathbb{R}^n\backslash\Sigma_\lambda}\frac{U_\lambda(x^*)-U_\lambda(y)}{|x^*-y|^{n+\alpha}}dy\right)\\
&=&  C_{n,\alpha}P.V.\left( \int_{\Sigma_\lambda}\frac{U_\lambda(x^*)-U_\lambda(y)}{|x^*-y|^{n+\alpha}}dy +\int_{\Sigma_\lambda}\frac{U_\lambda(x^*)-U_\lambda(y^\lambda)}{|x^*-y^\lambda|^{n+\alpha}}dy\right)\\
&=&  C_{n,\alpha}P.V.\left( \int_{\Sigma_\lambda}\frac{U_\lambda(x^*)-U_\lambda(y)}{|x^*-y|^{n+\alpha}}dy +\int_{\Sigma_\lambda}\frac{U_\lambda(x^*)+U_\lambda(y)}{|x^*-y^\lambda|^{n+\alpha}}dy\right).
\end{eqnarray*}
Since $x^*, y\in \Sigma_\lambda$, $|x^*-y|\leq |x^*-y^\lambda|$, and $x^*$ is the minimum point of $U_\lambda(x)$ in $\Sigma_\lambda$, it follows that
\begin{eqnarray*}
(-\Delta)^{\frac{\alpha}{2}}U_\lambda(x^*) &\leq&  C_{n,\alpha}\left( \int_{\Sigma_\lambda}\frac{U_\lambda(x^*)-U_\lambda(y)}{|x^*-y^\lambda|^{n+\alpha}}dy +\int_{\Sigma_\lambda}\frac{U_\lambda(x^*)+U_\lambda(y)}{|x^*-y^\lambda|^{n+\alpha}}dy\right)\\
&=& 2C_{n,\alpha} \int_{\Sigma_\lambda}\frac{1}{|x^*-y^\lambda|^{n+\alpha}}dy\cdot U_\lambda(x^*).
\end{eqnarray*}
Combining the above estimate with equation (\ref{eq:bigu}), we obtain
\begin{equation}\label{eq:estu}
\frac{\partial f}{\partial v}(u_\lambda(x^*),\eta_1(x^*,\lambda))V_\lambda(x^*) \leq 
b_1(x^*,\lambda)U_\lambda(x^*),
\end{equation}
where 
$$
b_1(x^*,\lambda)=
2C_{n,\alpha} \int_{\Sigma_\lambda}\frac{1}{|x^*-y^\lambda|^{n+\alpha}}dy
-\frac{\partial f}{\partial u}(\xi_1(x^*,\lambda),v(x^*)).
$$

We claim that  $b_1(x^*,\lambda)>0$, for sufficiently large $|x^*|$,.
In fact, for each fixed $\lambda\leq 0$, since $x^*\in \Sigma_\lambda$, we have $B_{|x^*|}(x^1)\subset \mathbb{R}^n\backslash \Sigma_{\lambda}$, where $x^1=(3|x^*|+x^*_1,(x^*)')$, and it follows that 
\begin{eqnarray*}
	\int_{\Sigma_\lambda}\frac{1}{|x^*-y^\lambda|^{n+\alpha}}dy &=& \int_{\mathbb{R}^n\backslash \Sigma_\lambda}\frac{1}{|x^*-y|^{n+\alpha}}dy\\
	&\geq& \int_{B_{|x^*|}(x^1)}\frac{1}{|x^*-y|^{n+\alpha}}dy\\
	&\geq& \int_{B_{|x^*|}(x^1)}\frac{1}{4^{n+\alpha}|x^*|^{n+\alpha}}dy\\
	&=& \frac{\omega_n}{4^{n+\alpha}|x^*|^{\alpha}},
\end{eqnarray*}
where $\omega_n$ is the volume of the unit ball in $\mathbb{R}^n$.
Therefore, 
\begin{equation}
\label{est:b1}
b_{1}(x^*,\lambda)\geq \frac{2C_{n,\alpha}\omega_n}{4^{n+\alpha}|x^*|^{\alpha}}
-\frac{\partial f}{\partial u}(\xi_1(x^*,\lambda),v(x^*)).
\end{equation}
Owning to $(ii)$, we choose $\delta>0$ such that $\frac{\partial f}{\partial u}(u,v)\lesssim u^{p-1}v^q$ provided that $u+v<\delta$ and $u,v>0$. For this particular $\delta$, because of $(i)$, there exists $R_1>0$ such that $0<u(x^*)<1/|x^*|^a$, $0<v(x^*)<1/|x^*|^b$ and $u(x^*)+v(x^*)<\delta$, when $|x^*|>R_1$. Moreover, as $x^*\in \Sigma_{\lambda}^{U-}$, $0<u_\lambda(x^*)<\xi_1(x^*,\lambda)< u(x^*)$. Hence, for all $|x^*|>R_1$, we have
\begin{eqnarray}
\label{est:fu}\frac{\partial f}{\partial u}(\xi_1(x^*,\lambda),v(x^*))
&\leq&
 \xi_1(x^*,\lambda)^{p-1}v(x^*)^q\\
\nonumber &<& u(x^*)^{p-1}v(x^*)^q\\
\nonumber &\leq & \frac{1}{|x^*|^{a(p-1)+qb}}.
\end{eqnarray}
Putting (\ref{est:fu}) into (\ref{est:b1}) and using assumption $(v)$, we can choose $R_2>R_1$, such that for $|x^*|>R_2$, 
\begin{equation}\label{est:b11}
b_{1}(x^*,\lambda)\geq c|x^*|^{-\alpha}>0.
\end{equation}

Thus, combining (\ref{est:b11}) with assumption $(iv)$, we derive from (\ref{eq:estu}) that
\begin{equation*}%\label{est:vx}
V_{\lambda}(x^*)\leq \frac{b_1(x^*,\lambda)}{\frac{\partial f}{\partial v}(u_\lambda(x^*),\eta_1(x^*,\lambda))}U_\lambda(x^*)<0,
\end{equation*}
which implies $x^*\in \Sigma_{\lambda}^{U-}\cap  \Sigma_{\lambda}^{V-}$.

Conclusion (a) will be proved if we can show that $\frac{b_1(x^*,\lambda)}{\frac{\partial f}{\partial v}(u_\lambda(x^*),\eta_1(x^*,\lambda))}>2$ for sufficiently large $|x^*|$.
Now using assumption $(iii)$, there is $\delta'>0$, such that $\frac{\partial f}{\partial v}(u,v)\lesssim u^pv^{q-1}$ for $u,v>0$, $u+v<\delta'$. Hence, we can choose $R_3>R_2$ so that for all $|x^*|>R_3$ we have
$$
0<u(x^*)<1/|x^*|^a,\quad 0<v(x^*)<1/|x^*|^b,\quad u(x^*)+v(x^*)<\delta’.
$$
Since we have already known that $x^*\in \Sigma_\lambda^{U-}\cap \Sigma_\lambda^{V-}$, and so
$0<u_{\lambda}(x^*)<\xi_1(x^*,\lambda)<u(x^*)$,
$0<v_{\lambda}(x^*)<\eta_1(x^*,\lambda)<v(x^*)$, it follows that
for $|x^*|>R_3$,
\begin{eqnarray}
\label{est:fv}\frac{\partial f}{\partial v}(u_\lambda(x^*),\eta_1(x^*,\lambda))
&\leq&
u_\lambda(x^*)^p\eta_1(x^*,\lambda)^{q-1}\\
\nonumber &<& u(x^*)^{p}v(x^*)^{q-1}\\
\nonumber &\leq & \frac{1}{|x^*|^{ap+qb-b}}.
\end{eqnarray}
In view of (\ref{est:b1}) and (\ref{est:fv}), we obtain for $|x^*|>R_3$
\begin{equation*}
\frac{b_1(x^*,\lambda)}{\frac{\partial f}{\partial v}(u_\lambda(x^*),\eta_1(x^*,\lambda))}>c|x^*|^{ap+b(q-1)-\alpha}\to \infty, \textrm{as}\ |x^*|\to \infty.
\end{equation*}
Therefore, there is $R_0>R_3$, for all $|x^*|>R_0$, we have
\begin{equation}
\label{eq:b1}
\frac{b_1(x^*,\lambda)}{\frac{\partial f}{\partial v}(u_\lambda(x^*),\eta_1(x^*,\lambda))}>2,
\end{equation}
which implies conclusion (a).

By the same token, one can give the proof of (b), which will not be included here.

\end{proof}

\begin{Pro}[Narrow Region Principle]
	\label{thm:nrp}
	Let $(u,v)\in \left(L_{\alpha}\cap C^{1,1}_{loc}(\mathbb{R}^n)\right)^2$ is a positive solution of system (\ref{sys:fes}) with $f,g\in C^1([0,+\infty)\times [0,+\infty),\mathbb{R})$. Assume
	$$
	(i')\quad \lim_{|x|\to \infty}u(x)=0,\quad \lim_{|x|\to \infty}v(x)=0
	$$
	and $(iv)$. Then there exists $l_0>0$ such that for all $0<l\leq l_0$ and all system (\ref{eq:bigu}) and (\ref{eq:bigv})
\begin{itemize}
	\item[(c)]  if there is $x^*\in \Omega_{\lambda,l}:=\{x\in \Sigma_{\lambda}|\lambda-l<x_1<\lambda\}$ satisfying $U_{\lambda}(x^*)=\min_{x\in \overline{\Sigma_{\lambda}}}U_{\lambda}(x)<0$, then 
	$$
	V_{\lambda}(x^*)<2U_{\lambda}(x^*)<0;
	$$
	\item[(d)] if there is $y^*\in \Omega_{\lambda,l}=\{x\in \Sigma_{\lambda}|\lambda-l<x_1<\lambda\}$ satisfying $V_{\lambda}(y^*)=\min_{y\in \overline{\Sigma_{\lambda}}}V_{\lambda}(y)<0$, then 
	$$
	U_{\lambda}(y^*)<2V_{\lambda}(y^*)<0.
	$$
\end{itemize}

\end{Pro}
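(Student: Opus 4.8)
The plan is to mimic the structure of the Decay at Infinity Principle, but exploiting smallness of the measure of the slab $\Omega_{\lambda,l}$ rather than smallness of $u,v$ at infinity. Suppose $x^*\in\Omega_{\lambda,l}$ realizes $U_\lambda(x^*)=\min_{\overline{\Sigma_\lambda}}U_\lambda<0$. As in the proof of Proposition \ref{thm:decay}, the anti-symmetry of $U_\lambda$ and the fact that $|x^*-y|\le|x^*-y^\lambda|$ for $x^*,y\in\Sigma_\lambda$ give
$$
(-\Delta)^{\frac{\alpha}{2}}U_\lambda(x^*)\le 2C_{n,\alpha}\Big(\int_{\Sigma_\lambda}\frac{dy}{|x^*-y^\lambda|^{n+\alpha}}\Big)U_\lambda(x^*),
$$
and combining with \eqref{eq:bigu},
$$
\frac{\partial f}{\partial v}(u_\lambda(x^*),\eta_1(x^*,\lambda))\,V_\lambda(x^*)\le b_1(x^*,\lambda)\,U_\lambda(x^*),\qquad
b_1(x^*,\lambda)=2C_{n,\alpha}\!\int_{\Sigma_\lambda}\!\frac{dy}{|x^*-y^\lambda|^{n+\alpha}}-\frac{\partial f}{\partial u}(\xi_1(x^*,\lambda),v(x^*)).
$$

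The key step is a lower bound on the integral $D(x^*):=\int_{\Sigma_\lambda}|x^*-y^\lambda|^{-(n-\alpha)-\alpha}\,dy=\int_{\mathbb{R}^n\setminus\Sigma_\lambda}|x^*-y|^{-(n+\alpha)}\,dy$ that blows up as $l\to 0$. Since $\mathrm{dist}(x^*,T_\lambda)<l$, the half-space $\mathbb{R}^n\setminus\Sigma_\lambda$ contains the portion of an annulus $\{y:\,l<|x^*-y|<1\}$ lying on the far side of $T_\lambda$, which for small $l$ occupies a fixed fraction of that annulus; integrating $|x^*-y|^{-(n+\alpha)}$ over it yields $D(x^*)\ge c\,(l^{-\alpha}-1)$, hence $D(x^*)\ge c\,l^{-\alpha}$ for $l$ small. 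The term $\partial f/\partial u$ is bounded: by $(i')$ we have $u,v\to 0$ at infinity, so $u,v$ are bounded on $\mathbb{R}^n$, and $f\in C^1$ on $[0,\infty)^2$ makes $\partial f/\partial u$ bounded on the relevant compact range; call the bound $M$. Then $b_1(x^*,\lambda)\ge 2C_{n,\alpha}\,c\,l^{-\alpha}-M>0$ once $l$ is small enough, uniformly in $\lambda$ and $x^*$, which already forces $V_\lambda(x^*)<0$ via $(iv)$.

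To upgrade $V_\lambda(x^*)<0$ to $V_\lambda(x^*)<2U_\lambda(x^*)$, I need $b_1(x^*,\lambda)\big/\frac{\partial f}{\partial v}(u_\lambda(x^*),\eta_1(x^*,\lambda))>2$. The denominator is again bounded above by a constant $M'$ using $f\in C^1$ and the boundedness of $u,v$, so the ratio is at least $(2C_{n,\alpha}c\,l^{-\alpha}-M)/M'$, which exceeds $2$ for $l\le l_0$ with $l_0$ depending only on $n,\alpha,M,M'$. This proves (c). Conclusion (d) follows by the symmetric argument with $V_\lambda$, $g$, $\frac{\partial g}{\partial u}$, $\frac{\partial g}{\partial v}$ in place of $U_\lambda$, $f$, $\frac{\partial f}{\partial v}$, $\frac{\partial f}{\partial u}$, using $(iv)$.

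The main obstacle is the geometric/measure-theoretic lower bound $D(x^*)\ge c\,l^{-\alpha}$ with a constant $c$ independent of $\lambda$ (and of $x^*$ in the slab): one must be careful that the annular region used is genuinely contained in $\mathbb{R}^n\setminus\Sigma_\lambda$ and occupies a uniformly positive solid-angle fraction, which is where the hypothesis $\mathrm{dist}(x^*,T_\lambda)<l$ enters. Everything else—the anti-symmetry computation and the boundedness of the partial derivatives—is routine given $f,g\in C^1$ and $(i')$.
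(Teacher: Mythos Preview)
Your proposal is correct and follows essentially the same route as the paper: derive the key inequality for $b_1(x^*,\lambda)$ from the anti-symmetry computation, show the integral term blows up like $l^{-\alpha}$ while the $\partial f/\partial u$ and $\partial f/\partial v$ terms stay bounded (via $(i')$ and $f\in C^1$), and conclude using $(iv)$. The only cosmetic difference is that the paper estimates $\int_{\mathbb{R}^n\setminus\Sigma_\lambda}|x^*-y|^{-(n+\alpha)}\,dy$ by integrating over the cylindrical slab $\{l<y_1-x_1^*<1,\ |y'-(x^*)'|<1\}$ in explicit coordinates rather than over an annular sector, arriving at the same $C\int_l^1 s^{-1-\alpha}\,ds\sim l^{-\alpha}$ lower bound.
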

\begin{proof}
Without loss of generality, let
$$
x^*\in \Omega_{\lambda,l},\ \textrm{and}\  U_{\lambda}(x^*)=\min_{x\in \overline{\Sigma_\lambda}}U_{\lambda}(x^*)<0,
$$
for some $l>0$.

The same computation as in the proof of Proposition \ref{thm:decay} gives us that
\begin{equation*}
	(-\Delta)^{\frac{\alpha}{2}}U_\lambda(x^*)
	\leq C_{n,\alpha} \int_{\Sigma_\lambda}\frac{2U_\lambda(x^*)}{|x^*-y^\lambda|^{n+\alpha}}dy
	< 0
\end{equation*}
and 
\begin{equation}\label{eq:estu2}
\frac{\partial f}{\partial v}(u_\lambda(x^*),\eta_1(x^*,\lambda))V_\lambda(x^*) \leq 
b_1(x^*,\lambda)U_\lambda(x^*),
\end{equation}
where 
$$
b_1(x^*,\lambda)=
2C_{n,\alpha} \int_{\Sigma_\lambda}\frac{1}{|x^*-y^\lambda|^{n+\alpha}}dy
-\frac{\partial f}{\partial u}(\xi_1(x^*,\lambda),v(x^*)).
$$

We claim that for sufficiently small $l$, $b_1(x^*,\lambda)>0$. On the one hand, following \cite{CLL14}, let $D=\{y\ |\ l<y_1-x_1^*<1,|y'-(x^*)'|<1 \}$, $s=y_1-x_1^*$, $\tau\cdot (\varphi_1,\dots, \varphi_{n-1})=y'-(x^*)'$, $(\varphi_1,\dots, \varphi_{n-1})\in S^{n-2}(1)$ and $\omega_{n-2}=|B_1(0)|$ in $\mathbb{R}^{n-2}$.
A direct computation shows
\begin{eqnarray*}
\int_{\Sigma_\lambda}\frac{1}{|x^*-y^\lambda|^{n+\alpha}}dy 
&=& \int_{\mathbb{R}^n\backslash\Sigma_\lambda}\frac{1}{|x^*-y|^{n+\alpha}}dy\\
&\geq& \int_D \frac{1}{|x^*-y|^{n+\alpha}}dy\\
&=& \int_l^1 \int_0^1 \frac{\omega_{n-2}\tau^{n-2}d\tau}{(s^2+\tau^2)^{\frac{n+\alpha}{2}}}ds\\
&=& \int_l^1 \int_{0}^{\frac{1}{s}}\frac{\omega_{n-2}(st)^{n-2}sdt}{s^{n+\alpha}(1+t^2)^{\frac{n+\alpha}{2}}}ds\quad (\textrm{Let}\ \tau=st)\\
&=& \int_l^1 \frac{1}{s^{1+\alpha}} \int_{0}^{\frac{1}{s}}\frac{\omega_{n-2}t^{n-2} dt}{(1+t^2)^{\frac{n+\alpha}{2}}}ds\\
&\geq&  \int_l^1 \frac{1}{s^{1+\alpha}} \int_{0}^{1}\frac{\omega_{n-2}t^{n-2} dt}{(1+t^2)^{\frac{n+\alpha}{2}}}ds\\
&\geq& C\int_l^1 \frac{1}{s^{1+\alpha}}ds\to +\infty, (\textrm{as}\ l\to 0^+). 
\end{eqnarray*}
On the other hand, thanks to $(i')$, the solution $u(x)$ and $v(x)$ are bounded functions on $\mathbb{R}^n$, which means $\xi(x,\lambda)$ is also bounded. On account of $f\in C^1([0,+\infty)\times [0,+\infty),\mathbb{R})$, we have some $c>0$ such that
$$
\left|\frac{\partial f}{\partial u}(\xi_1(x^*,\lambda),v(x^*))\right|<c, \textrm{for all}\ \lambda.
$$
Hence there is $l_1>0$ such that for all $0<l\leq l_1$ if $x^*\in \Omega_{\lambda,l}$ we have
$
b_1(x^*,\lambda)>0.
$ 
Moreover, we actually have
\begin{equation}
\label{lim:b1}
\lim_{l\to 0^+}
b_1(x^*,\lambda)=+\infty.
\end{equation}

Combining the above estimate and assumption $(iv)$,  we derive from (\ref{eq:estu2}) that
for all $0<l<l_1$,
\begin{equation}
\label{eq:negv}
V_{\lambda}(x^*)\leq \frac{b_1(x^*,\lambda)}{\frac{\partial f}{\partial v}(u_\lambda(x^*),\eta_1(x^*,\lambda))}U_{\lambda}(x^*)<0.
\end{equation}

Note that $\frac{\partial f}{\partial v}(u_\lambda(x^*),\eta_1(x^*,\lambda))$ is uniformly bounded with respect ot $\lambda$, i.e. there is $c>0$ such that 
\begin{equation}
\label{est:fvbd}
0<\frac{\partial f}{\partial v}(u_\lambda(x^*),\eta_1(x^*,\lambda))<c, \forall \lambda\in \mathbb{R}.
\end{equation}
Using  (\ref{lim:b1}), (\ref{eq:negv}) and (\ref{est:fvbd}), we can choose $0<l_0<l_1$ such that 
for all $0<l<l_0$, 
\begin{equation}
\label{est:b1fvlt2}
\frac{b_1(x^*,\lambda)}{\frac{\partial f}{\partial v}(u_\lambda(x^*),\eta_1(x^*,\lambda))}>2,
\end{equation}
Putting (\ref{est:b1fvlt2}) into (\ref{eq:negv}), we obtain conclusion (c).

The proof of conclusion (d) is similar and we shall omit it.
\end{proof}
 
\section{Proof of Theorem \ref{thm:main}}\label{sec:pfthmmain}

In this section, we prove Theorem \ref{thm:main} by using the direct method of moving planes in the spirit of \cite{CLL14}. We divide our proof in three steps.

\textbf{Step 1. } There exists $\lambda^*<0$ such that
\begin{equation}
\label{eq:posuv}
U_\lambda(x)\geq 0\ \textrm{and}\ V_\lambda(x)\geq 0, \quad \forall x\in \Sigma_\lambda,
\end{equation}
for all $\lambda\leq \lambda^*$.

\begin{proof}[Proof of Step 1]
Choose $\lambda^*<-R_0$, where $R_0$ is given by Proposition \ref{thm:decay}. It follows that $U_{\lambda}(x)\geq 0$ and $V_{\lambda}(x)\geq 0$ in $\Sigma_{\lambda}$ for all $\lambda\leq \lambda^*$. 

Assume for contradiction that there is a $\lambda\leq \lambda^*$ and a point $x^*\in \Sigma_\lambda$ such that $U_{\lambda}(x^*)<0$. Without loss of generality, we assume 
$$
U_{\lambda}(x^*)=\min_{x\in\overline{\Sigma_{\lambda}}} U_{\lambda}(x)<0.
$$
Since $\lambda\leq \lambda^*<-R_0$, we know that $|x^*|>R_0$. An immediate consequence of Proposition \ref{thm:decay} is
\begin{equation}
\label{est:v1}
V_{\lambda}(x^*)< 2U_{\lambda}(x^*)<0.
\end{equation}
Since assumption $(i')$, which implies $\lim_{|x|\to \infty}V_\lambda(x)=0$, and $V_\lambda(x)=0$ for $x\in T_\lambda$, there exists a point $y^*\in \Sigma_{\lambda}$ such that
\begin{equation*}
%\label{ineq:s1a}
V_{\lambda}(y^*)=\min_{y\in \overline{\Sigma_\lambda}}V_\lambda(y)<0.
\end{equation*}
In view of $|y^*|>R_0$, it follows from Proposition \ref{thm:decay} that
\begin{equation}
\label{ineq:s1b}
U_{\lambda}(y^*)<2V_\lambda(y^*)<0.
\end{equation}
Combining (\ref{est:v1}) and (\ref{ineq:s1b}), we obtain
$$
V_{\lambda}(x^*)<2U_{\lambda}(x^*)\leq 2U_{\lambda}(y^*)\leq 4V_{\lambda}(y^*)< 4V_{\lambda}(x^*).
$$
Noticing that $V_{\lambda}(x^*)<0$, we get
$
1>4,
$
which is a contradiction.

\end{proof}

We now move the plane $T_\lambda$ to the right as long as (\ref{eq:posuv}) holds to its limiting position.
Define
\begin{equation*}
%\label{def:lam0}
\lambda_0=\sup \{\lambda\leq 0\ |\ U_{\mu}(x)\geq 0, V_{\mu}(x)\geq 0, \forall x\in \Sigma_{\mu}, \forall \mu\leq \lambda \}.
\end{equation*}
Step 1 indicates that $\lambda_0>-\infty$. Obviously, $\lambda_0\leq 0$. Since all the functions we consider are continuous with respect to $\lambda$, we know that $U_{\lambda_0}(x)\geq 0$ and $V_{\lambda_0}(x)\geq 0$, for all $x\in \Sigma_{\lambda_0}$.

Before proceeding further, we shall investigate the properties of functions $U_{\lambda_0}(x)$ and $V_{\lambda_0}(x)$.
\begin{Claim}
	\label{claim:a}
	If $U_{\lambda_0}(x)\equiv 0$, then $V_{\lambda_0}(x)\equiv 0$. If $V_{\lambda_0}(x)\equiv 0$, then $U_{\lambda_0}(x)\equiv 0$.
\end{Claim}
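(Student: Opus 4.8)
The plan is to exploit the strict positivity in assumption $(iv)$ together with the pointwise identities (\ref{eq:bigu}) and (\ref{eq:bigv}). Suppose first that $U_{\lambda_0}(x)\equiv 0$ in $\Sigma_{\lambda_0}$. Since $U_{\lambda_0}$ is anti-symmetric about $T_{\lambda_0}$, it then vanishes identically on $\mathbb{R}^n$, so $(-\Delta)^{\frac{\alpha}{2}}U_{\lambda_0}\equiv 0$; in particular the left-hand side of (\ref{eq:bigu}) is zero for every $x\in\Sigma_{\lambda_0}$. Substituting $U_{\lambda_0}\equiv 0$ into the right-hand side of (\ref{eq:bigu}) kills the first term and leaves
$$
\frac{\partial f}{\partial v}(u_{\lambda_0}(x),\eta_1(x,\lambda_0))\,V_{\lambda_0}(x)=0,\qquad \forall x\in\Sigma_{\lambda_0}.
$$

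Next I would check that the coefficient in front of $V_{\lambda_0}$ is strictly positive. Indeed $u_{\lambda_0}(x)=u(x^{\lambda_0})>0$, while $\eta_1(x,\lambda_0)$ lies between $v_{\lambda_0}(x)>0$ and $v(x)>0$, hence $\eta_1(x,\lambda_0)>0$; therefore $(u_{\lambda_0}(x),\eta_1(x,\lambda_0))\in\mathbb{R}^+\times\mathbb{R}^+$ and assumption $(iv)$ gives $\frac{\partial f}{\partial v}(u_{\lambda_0}(x),\eta_1(x,\lambda_0))>0$. Dividing, we conclude $V_{\lambda_0}(x)=0$ for all $x\in\Sigma_{\lambda_0}$, which is the first implication.

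The second implication follows from the mirror-image argument: assuming $V_{\lambda_0}\equiv 0$, the left-hand side of (\ref{eq:bigv}) vanishes and the equation reduces to $\frac{\partial g}{\partial u}(\xi_2(x,\lambda_0),v_{\lambda_0}(x))\,U_{\lambda_0}(x)=0$; since $v_{\lambda_0}(x)>0$ and $\xi_2(x,\lambda_0)$ lies between $u_{\lambda_0}(x)>0$ and $u(x)>0$, assumption $(iv)$ forces $\frac{\partial g}{\partial u}(\xi_2(x,\lambda_0),v_{\lambda_0}(x))>0$, whence $U_{\lambda_0}(x)\equiv 0$ in $\Sigma_{\lambda_0}$.

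I do not expect a genuine obstacle here; the only point that needs a word of care is verifying that the arguments of $\partial f/\partial v$ and $\partial g/\partial u$ genuinely lie in the open quadrant $\mathbb{R}^+\times\mathbb{R}^+$ so that the strict sign in $(iv)$ applies — this is immediate from the positivity of $u,v$ (hence of $u_{\lambda_0},v_{\lambda_0}$) and the intermediate-value location of $\xi_i,\eta_i$. One should also record that $U_{\lambda_0}\equiv 0$ (resp. $V_{\lambda_0}\equiv 0$) on $\Sigma_{\lambda_0}$ propagates to all of $\mathbb{R}^n$ by anti-symmetry, which is what legitimizes applying the fractional Laplacian to the zero function and using (\ref{eq:bigu})–(\ref{eq:bigv}).
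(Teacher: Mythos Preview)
Your argument is correct and follows exactly the same route as the paper: plug $U_{\lambda_0}\equiv 0$ into (\ref{eq:bigu}) and use assumption $(iv)$ to kill $V_{\lambda_0}$, then argue symmetrically via (\ref{eq:bigv}). You have simply been more explicit than the paper about why the coefficient is strictly positive and why the fractional Laplacian of $U_{\lambda_0}$ vanishes, which is fine.
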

\begin{proof}
If $U_{\lambda_0}(x)\equiv 0$, then equation (\ref{eq:bigu}) becomes
\begin{equation*}
0 =(-\Delta)^{\frac{\alpha}{2}}U_{\lambda_0}(x)
=\frac{\partial f}{\partial v}(u_{\lambda_0}(x),\eta_1(x,\lambda_0))V_{\lambda_0}(x).
\end{equation*}
From assumption $(iv)$, we have $V_{\lambda_0}(x)\equiv 0$.
\end{proof}

\begin{Claim}
	\label{claim:b} 
	If $U_{\lambda_0}(x)\not\equiv 0$ or $V_{\lambda_0}(x)\not\equiv 0$, then $U_{\lambda_0}(x)>0$ and $V_{\lambda_0}(x)>0$, for all $x\in \Sigma_{\lambda_0}$. 
\end{Claim}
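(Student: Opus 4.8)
The plan is to prove the claim by a strong maximum principle for the fractional Laplacian, applied to each component and coupled through assumption $(iv)$. First I would note that by Claim \ref{claim:a} the hypothesis ``$U_{\lambda_0}\not\equiv 0$ or $V_{\lambda_0}\not\equiv 0$'' is in fact equivalent to ``$U_{\lambda_0}\not\equiv 0$ and $V_{\lambda_0}\not\equiv 0$'', and recall from the discussion preceding the claim that $U_{\lambda_0}\ge 0$ and $V_{\lambda_0}\ge 0$ throughout $\Sigma_{\lambda_0}$. So it suffices to rule out interior zeros of each of $U_{\lambda_0}$ and $V_{\lambda_0}$.

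Arguing by contradiction, suppose $U_{\lambda_0}(x^0)=0$ for some $x^0\in\Sigma_{\lambda_0}$, so that $x^0$ is a minimum of $U_{\lambda_0}$ over $\overline{\Sigma_{\lambda_0}}$. I would repeat the splitting of the principal-value integral used in the proof of Proposition \ref{thm:decay}, now with $U_{\lambda_0}(x^0)=0$ and the antisymmetry $U_{\lambda_0}(y^{\lambda_0})=-U_{\lambda_0}(y)$, to obtain
\[
(-\Delta)^{\frac{\alpha}{2}}U_{\lambda_0}(x^0)=-\,C_{n,\alpha}\int_{\Sigma_{\lambda_0}}U_{\lambda_0}(y)\left(\frac{1}{|x^0-y|^{n+\alpha}}-\frac{1}{|x^0-y^{\lambda_0}|^{n+\alpha}}\right)dy\le 0,
\]
the sign coming from $U_{\lambda_0}\ge 0$ and $|x^0-y|\le|x^0-y^{\lambda_0}|$ on $\Sigma_{\lambda_0}$. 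On the other hand, (\ref{eq:bigu}) evaluated at $x^0$ reduces, since $U_{\lambda_0}(x^0)=0$, to $(-\Delta)^{\frac{\alpha}{2}}U_{\lambda_0}(x^0)=\frac{\partial f}{\partial v}(u_{\lambda_0}(x^0),\eta_1(x^0,\lambda_0))V_{\lambda_0}(x^0)\ge 0$ by assumption $(iv)$ and $V_{\lambda_0}\ge 0$. Comparing the two, the displayed integral must vanish; since the kernel difference is strictly positive for every $y\in\Sigma_{\lambda_0}$ and $U_{\lambda_0}$ is continuous and nonnegative, this forces $U_{\lambda_0}\equiv 0$ on $\Sigma_{\lambda_0}$, contradicting the hypothesis. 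Hence $U_{\lambda_0}>0$ on $\Sigma_{\lambda_0}$, and the symmetric argument using (\ref{eq:bigv}) together with $\frac{\partial g}{\partial u}>0$ from $(iv)$ gives $V_{\lambda_0}>0$ on $\Sigma_{\lambda_0}$.

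I expect the only subtle point to be the ``equality case'' of the maximum-principle inequality --- namely that vanishing of $\int_{\Sigma_{\lambda_0}}U_{\lambda_0}(y)\big(|x^0-y|^{-n-\alpha}-|x^0-y^{\lambda_0}|^{-n-\alpha}\big)\,dy$ forces $U_{\lambda_0}\equiv 0$ --- which is immediate once one observes that the kernel difference is strictly positive at every interior point (where $y_1<\lambda_0$ strictly). Handling the principal value near $x^0$ is routine and identical to what is done in Proposition \ref{thm:decay}. I note that assumptions $(i)$--$(iii)$, $(v)$ and the decay rates of $u,v$ play no role in this claim; only $(iv)$ and the sign/antisymmetry structure of the problem are used.
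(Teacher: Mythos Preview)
Your proof is correct and follows essentially the same approach as the paper's: assume an interior zero of $U_{\lambda_0}$, use the antisymmetry of $U_{\lambda_0}$ to reduce the fractional Laplacian at that point to an integral over $\Sigma_{\lambda_0}$ with a strictly signed kernel difference, and compare with the nonnegative right-hand side of \eqref{eq:bigu} coming from assumption $(iv)$ and $V_{\lambda_0}\ge 0$. The only cosmetic difference is that the paper invokes $U_{\lambda_0}\not\equiv 0$ directly to get the strict inequality $(-\Delta)^{\alpha/2}U_{\lambda_0}(x^*)<0$ (and then applies Claim~\ref{claim:a} at the end to treat $V_{\lambda_0}$), whereas you first pass through the ``equality case'' and invoke Claim~\ref{claim:a} at the start; the content is identical.
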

\begin{proof}
Let us take $U_{\lambda_0}(x) \not\equiv 0$ as an example.
Since we have already know $U_{\lambda_0}(x)\geq 0$, for all $x\in \Sigma_{\lambda_0}$. To show $U_{\lambda_0}(x)>0$, $\forall x\in \Sigma_{\lambda_0}$, we assume for contradiction that there is some point $x^*\in \Sigma_{\lambda_0}$, so that
$$
U_{\lambda_0}(x^*)=0.
$$

At point $x^*$, we compute
\begin{eqnarray}
& & \label{est:minu}
(-\Delta )^{\frac{\alpha}{2}}U_{\lambda_0}(x^*)\\
\nonumber &=& C_{n,\alpha} P.V. \int_{\mathbb{R}^n} \frac{-U_{\lambda_0}(y)}{|x^*-y|^{n+\alpha}}dy\\
\nonumber &=&  C_{n,\alpha}P.V. \left( \int_{\Sigma_{\lambda_0}}\frac{-U_{\lambda_0}(y)}{|x^*-y|^{n+\alpha}}dy +\int_{\mathbb{R}^n\backslash\Sigma_{\lambda_0}}\frac{-U_{\lambda_0}(y)}{|x^*-y|^{n+\alpha}}dy\right)\\
\nonumber &=&  C_{n,\alpha}P.V.\left( \int_{\Sigma_{\lambda_0}}\frac{-U_{\lambda_0}(y)}{|x^*-y|^{n+\alpha}}dy +\int_{\Sigma_{\lambda_0}}\frac{-U_{\lambda_0}(y^{\lambda_0})}{|x^*-y^{\lambda_0}|^{n+\alpha}}dy\right)\\
\nonumber &=& C_{n,\alpha}P.V.\left( \int_{\Sigma_{\lambda_0}}\frac{-U_{\lambda_0}(y)}{|x^*-y|^{n+\alpha}}dy +\int_{\Sigma_{\lambda_0}}\frac{U_{\lambda_0}(y)}{|x^*-y^{\lambda_0}|^{n+\alpha}}dy\right)\\
\nonumber &=&  C_{n,\alpha}P.V.\left( \int_{\Sigma_{\lambda_0}}U_{\lambda_0}(y)\left(\frac{1}{|x^*-y^{\lambda_0}|^{n+\alpha}}-\frac{1}{|x^*-y|^{n+\alpha}}\right)dy\right).
\end{eqnarray}
From the fact that $|x^*-y^{\lambda_0}|>|x^*-y|$, $y\in \Sigma_{\lambda_0}$, $U_{\lambda_0}(y)\geq 0$ and $U_{\lambda_0}(y)\not\equiv 0$, it turns out that
\begin{equation*}
%\label{eq:aa}
(-\Delta )^{\frac{\alpha}{2}}U_{\lambda_0}(x^*)< 0.
\end{equation*}

On the other hand at $x^*$, the right hand side of equation (\ref{eq:bigu}) equals to
$$
\frac{\partial f}{\partial v}(u_{\lambda_0}(x^*),\eta_1(x^*,\lambda_0))V_{\lambda_0}(x^*)\geq 0,
$$
which leads to a contradiction.
Consequently, we obtain $U_{\lambda_0}(x)>0$, for all $x\in \Sigma_{\lambda_0}$.
In such case, Claim \ref{claim:a} suggests us that  $V_{\lambda_0}(x)\not\equiv 0$. Using a similar argument, one can show that $V_{\lambda_0}(x)>0$, for all $x\in \Sigma_{\lambda_0}$.

\end{proof}

We proceed to prove Theorem \ref{thm:main}.

\textbf{Step 2.}
If $\lambda_0<0$, then 
$$
U_{\lambda_0}(x)\equiv 0, \quad V_{\lambda_0}(x)\equiv 0, \quad \forall x\in \Sigma_{\lambda_0}.
$$

\begin{proof}[Proof of Step 2.]
By Claim \ref{claim:a} and \ref{claim:b}, it suffices to exclude the situation when both $U_{\lambda_0}$ and $V_{\lambda_0}$ are strictly positive in $\Sigma_{\lambda_0}$. Let us suppose this is the case, i.e. 
\begin{equation}
\label{eq:pouv}
U_{\lambda_0}(x)>0, V_{\lambda_0}(x)>0, \forall x\in \Sigma_{\lambda_0}.
\end{equation} 

From the definition of $\lambda_0$ ($<0$), there exist sequences $\{\lambda_k\}_{k=1}^\infty$ and $\{x^k\}_{k=1}^\infty$ satisfying
\begin{equation}
\label{def:lamk}
\lambda_0<\lambda_{k+1}<\lambda_k<0, k=1,2,\dots;\quad \lim_{k\to \infty}\lambda_k=\lambda_0;
\end{equation}
$
x^k\in \Sigma_{\lambda_k}$, and either $U_{\lambda_k}(x^k)<0$ or $V_{\lambda_k}(x^k)<0$. Taking $U_{\lambda_k}(x^k)<0$ (up to a subsequence) as an example, we can rename $x^k$ to be the minimum points, i.e.
\begin{equation}
\label{eq:uxk}
U_{\lambda_k}(x^k)=\min_{x\in \overline{\Sigma_k}} U_{\lambda_k}(x)<0,\quad k=1,2,\dots.
\end{equation}

There are two possible cases.

\textbf{Case 1.}
The sequence $\{x^k\}_{k=1}^\infty$ contains a bounded subsequence. 

Without loss of generality, we assume 
\begin{equation}
\label{eq:limxk}
\lim_{k\to \infty}x^k=x^*.
\end{equation}
From (\ref{eq:uxk}) and (\ref{eq:pouv}), it is easy to see
$$
x^*\in \cap_{k=1}^{+\infty}\Sigma_{\lambda_0}=\overline{\Sigma_{\lambda_0}}, \quad 
U_{\lambda_0}(x^*)=0.
$$
Since $U_{\lambda_0}(x)>0$ for $x\in \Sigma_{\lambda_0}$, $x^*$ has to lie on the boundary of $\Sigma_{\lambda_0}$, i.e. $x^*\in T_{\lambda_0}$.
On account of (\ref{def:lamk}) and (\ref{eq:limxk}), for $l_0>0$ defined by Proposition \ref{thm:nrp}, we can choose $K_1>0$ so that if $k>K_1$, $\lambda_0<\lambda_k<\lambda_0+l_0/2$ and 
$$
x^k\in \Omega_{\lambda_k+l_0/2,l_0}.
$$
According to Proposition \ref{thm:nrp}, we have
\begin{equation}
\label{ineq:vxk2}
V_{\lambda_k}(x^k)< 2U_{\lambda_k}(x^k)<0.
\end{equation}
It follows that there is
$y^k\in\Sigma_{\lambda_k}$, such that
$$
V_{\lambda_k}(y^k)=\min_{y\in \overline{\Sigma_{\lambda_k}}}V_{\lambda_k}(y)<0,\quad  k=K_1+1,K_1+2,\dots.
$$
For sequence $\{y^k\}_{K_1+1}^\infty$, there are also two possible cases. 

\textbf{Case 1.1}
$\{y^k\}_{K_1+1}^\infty$ has a bounded subsequence.

In this case, we also denote the convergent subsequence as $y^k$, i.e. $\lim_{k\to \infty}y^k=y^*$. It follows that
$V_{\lambda_0}(y^*)=0$ and so $y^*\in T_{\lambda_0}$. Hence, there is $K>K_1$, for $k>K$, $y^k\in \Omega_{\lambda_k+l_0/2,l_0}$. By using Proposition \ref{thm:nrp}, 
we have 
\begin{equation}
\label{ineq:uyk2}
U_{\lambda_k}(y^k)< 2V_{\lambda_k}(y^k)<0.
\end{equation}

Combining (\ref{ineq:uyk2}) with (\ref{ineq:vxk2}), we obtain for $k>K$
$$
U_{\lambda_k}(y^k)<2V_{\lambda_k}(y^k)\leq 2V_{\lambda_k}(x^k)< 4U_{\lambda_k}(x^k)\leq 4U_{\lambda_k}(y^k),
$$ 
which leads to a contradiction since $U_{\lambda_k}(y^k)<0$.

\textbf{Case 1.2} $\lim_{k\to \infty}|y^k|=\infty$.

In this case, there is $K>K_1$, such that for all $k>K$, $|y^k|>R_0$.
It follows from Proposition \ref{thm:decay} that
\begin{equation}
\label{ineq:uyk3}
U_{\lambda_k}(y^k)< 2V_{\lambda_k}(y^k)<0,\quad k>K.
\end{equation}

Combining (\ref{ineq:uyk3}) with (\ref{ineq:vxk2}), we find for $k>K$
$$
U_{\lambda_k}(y^k)<2V_{\lambda_k}(y^k)\leq 2V_{\lambda_k}(x^k)< 4U_{\lambda_k}(x^k)\leq 4U_{\lambda_k}(y^k), 
$$ 
which leads to a contradiction since $U_{\lambda_k}(y^k)<0$.

\textbf{Case 2.}
$\lim_{k\to \infty}|x^k|=\infty$.

In this case, there is $K_2>0$ such that for $k>K_2$, we have $|x^k|>R_0$.
Because of Proposition \ref{thm:decay},
\begin{equation}
\label{ineq:vxk4}
V_{\lambda_k}(x^k)<2 U_{\lambda_k}(x^k)<0, \forall k>K_2.
\end{equation}
Hence, for each $k>K_2$ there is $y^k\in\Sigma_{\lambda_k}$ satisfies
$$
V_{\lambda_k}(y^k)=\min_{y\in \overline{\Sigma_k}}V_{\lambda_k}(y)<0,\quad \forall k>K_2.
$$

We consider the following two possible cases.

\textbf{Case 2.1} $\{y^k\}$ has a bounded subsequence.

By a similar argument as in Case 1.2, we will find a contradiction in this case.

\textbf{Case 2.2}
$\lim_{k\to \infty}|y^k|=\infty$.

We can choose $K>K_2$, such that for $k>K$, $|y^k|>R_0$. According to Proposition \ref{thm:decay}, we obtain
\begin{equation}
\label{ineq:uyk4}
U_{\lambda_k}(y^k)<2V_{\lambda_k}(y^k)<0,\quad k>K.
\end{equation}
Therefore, (\ref{ineq:uyk4}) and (\ref{ineq:vxk4}) tell us that when $k>K$
$$
U_{\lambda_k}(y^k)<2V_{\lambda_k}(y^k)\leq 2V_{\lambda_k}(x^k)< 4U_{\lambda_k}(x^k)\leq 4U_{\lambda_k}(y^k),
$$ 
which leads to a contradiction since $U_{\lambda_k}(y^k)<0$.

To conclude, if $\lambda_0<0$, we have
$$
U_{\lambda_0}\equiv 0, V_{\lambda_0}(x)\equiv 0, \forall x\in \Sigma_{\lambda_0}.
$$

\end{proof}

\textbf{Step 3.}
The solution $(u,v)$ is radially symmetric with respect to some point in $\mathbb{R}^n$.

\begin{proof}[Proof of Step 3.]
	
First, we show that $(u,v)$ is symmetric with respect to some hyperplane $\{x\in \mathbb{R}^n|x_1=c\}$.

Clearly, the consequence of Step 2 says that if $\lambda_0<0$, then $(u,v)$ is symmetric with respect to hyperplane $\{x\in \mathbb{R}^n|x_1=\lambda_0\}$.

If $\lambda_0=0$, we move the plane $T_\lambda$ from the $+\infty$ to the left.
Define 
$$
\lambda_0'=\inf \{\lambda\geq 0\ |\ U_{\mu}(x)\leq 0, V_{\mu}(x)\leq 0, \forall x\in \Sigma_{\mu}, \forall \mu\geq \lambda \}.
$$
If $\lambda_0'>0$,
an argument similar to the one used in Step 2 shows that
$U_{\lambda_0'}\equiv 0$ and $V_{\lambda_0'}\equiv 0$, which implies $(u,v)$ is symmetric with respect to hyperplane $\{x\in \mathbb{R}^n|x_1=\lambda_0'\}$.
If $\lambda_0'=0=\lambda_0$, then $U_{0}\geq 0$, $V_0\geq 0$, $\forall x\in \Sigma_0$ and $U_{0}\leq 0$, $V_0\leq 0$, $\forall x\in \Sigma_0$. So we must have $U_{0}\equiv 0$ and $V_{0}\equiv 0$, i.e. $(u,v)$ is symmetric with respect to hyperplane $\{x\in \mathbb{R}^n|x_1=0\}$.

Since the $x_1$ direction can chosen arbitarily, we have actually show that $(u,v)$ are radially symmetric about some point in $\mathbb{R}^n$.

\end{proof}

\section{Proof of Corollaries}\label{sec:cor}

\begin{proof}[Proof of Corollary \ref{col:ex1}]
Set $f(u,v)=\sum_{i=1}^Nu^{p_i}v^{q_i}$ and $g(u,v)=\sum_{i=1}^Nu^{r_i}v^{s_i}$. Choosing 
$0<p'<p$, $0<q'<q$, $0<r'<r$, $0<s'<s$ such that 
$$
\alpha<\min\{ap'+bq'-a, ap'+bq'-b, ar'+bs'-a, ar'+bs'-b\},
$$
we can easily check that all the assumptions in Theorem \ref{thm:main} are satisfied.
\end{proof}

\begin{proof}[Proof of Theorem \ref{thm:rev}]
The proof of this result is quite similar to the one of Theorem \ref{thm:main}. Exam carefully the proof of Proposition \ref{thm:decay}, we see that if the assumptions $(ii)$ $(iii)$ and $(v)$ are replaced by $(ii')$ $(iii')$ and $(v')$ respectively, (\ref{est:fu}) is replaced by 
\begin{equation*}
\frac{\partial f}{\partial u}(\xi_1(x^*,\lambda),v(x^*))
\leq
\xi_1(x^*,\lambda)^{p-1}
<u(x^*)^{p-1}
\leq \frac{1}{|x^*|^{a(p-1)}},
\end{equation*}
and (\ref{est:fv}) is replaced by
\begin{equation*}
\frac{\partial f}{\partial v}(u_\lambda(x^*),\eta_1(x^*,\lambda))
\leq
\eta_1(x^*,\lambda)^{q-1}
< v(x^*)^{q-1}
\leq \frac{1}{|x^*|^{b(q-1)}},
\end{equation*}
then inequalities (\ref{est:b1}) and (\ref{eq:b1}) also hold. So does the conclusion of Proposition \ref{thm:decay}. The conclusion of Proposition \ref{thm:nrp} holds since it does not assume $(ii)$, $(iii)$ or $(v)$.
Therefore, it is straightforward to show Theorem \ref{thm:rev}. The detail pf proof will be omitted.
\end{proof}

\begin{proof}[Proof of Corollary \ref{col:ex2}]
 
Let $f(u,v)=v^{(n+\alpha)/(n-\alpha)}-\kappa_1 u^p$, $g(u,v)=u^{(n+\alpha)/(n-\alpha)}-\kappa_2 v^s$. 
Choosing any $p',s'>n/(n-\alpha)$, $q',r'\in(n/(n-\alpha), (n+\alpha)/{n-\alpha})$, we can see that all the assumptions in Theorem \ref{thm:rev} are satisfied. Hence the conclusion is verified by applying Theorem \ref{thm:rev}.
\end{proof}

\begin{proof}[Proof of Theorem \ref{thm:henon}]
	Since no decay condition on the solution is assumed, we make a Kelvin transform. Let
	$$
	\bar u(x)=\frac{1}{|x|^{n-\alpha}}u(\frac{x}{|x|^2}),\quad \bar v(x)=\frac{1}{|x|^{n-\alpha}}v(\frac{x}{|x|^2}).
	$$
Clearly,
	$$
	\bar u(x)\sim |x|^{-(n-\alpha)}, \quad \bar v(x)\sim |x|^{-(n-\alpha)},\quad \textrm{as}\ |x|\to \infty,
	$$
	and $\lim_{x\to 0} \bar u(x)=\lim_{x\to 0} \bar v(x)=+\infty$.
	It is well known that 
\begin{equation}\label{sys:baruv}
\left\{
\begin{array}{ll}
(-\Delta)^{\frac{\alpha}{2}}\bar u(x)=(\bar v(x))^q, & x\in \mathbb{R}^n\backslash\{0\},
\\
(-\Delta)^{\frac{\alpha}{2}}\bar v(x)=(\bar u(x))^r, & x\in \mathbb{R}^n\backslash\{0\}.
\end{array}
\right.
\end{equation}
Our task now is to show that $\bar u$ and $\bar v$ are symmetric about the origin.

We use the same notation as in Section \ref{sec:pre}. 
Let $\bar u_{\lambda}(x)=\bar u(x^\lambda)$, $\bar v_{\lambda}(x)=\bar v(x^\lambda)$,
$$
U_{\lambda}(x)=\bar u_{\lambda}(x)-\bar u(x), \quad V_{\lambda}(x)=\bar v_{\lambda}(x)-\bar v(x).
$$
For $\lambda<0$, 
we have $\lim_{|x|\to \infty}U_{\lambda}(x)=0$, and $\lim_{x\to 0^\lambda}U_{\lambda}(x)=+\infty$ (by using assumption (vi)), where $0^\lambda=(2\lambda,0,\dots,0)$.
Therefore, if there is some point $x\in \Sigma_{\lambda}$, such that $U_{\lambda}(x)<0$, then $U_{\lambda}(x)$ attains its negative minimum in $\Sigma_{\lambda}$ and the minimum point belongs to $\Sigma_\lambda\backslash \{0^\lambda\}$. Similar results hold for $V_{\lambda}$.

It follows from (\ref{sys:baruv}) and the definition of $U_\lambda$ and $V_\lambda$ that
\begin{equation}\label{sys:biguv}
\left\{
\begin{array}{ll}
(-\Delta)^{\frac{\alpha}{2}}U_{\lambda}(x)=q\eta(x,\lambda)^{q-1}V_\lambda(x), & x\in \Sigma_{\lambda}\backslash\{0^\lambda\},
\\
(-\Delta)^{\frac{\alpha}{2}}U_\lambda(x)=r\xi(x,\lambda)^{r-1}U_{\lambda}(x), & x\in \Sigma_{\lambda}\backslash\{0^\lambda\},
\end{array}
\right.
\end{equation}
where $\xi(x,\lambda)$ is between $\bar u_\lambda(x)$ and $\bar u(x)$; $\eta(x,\lambda)$ is between $\bar v_\lambda(x)$ and $\bar v(x)$.

Using an argument similar to the one used in Section \ref{sec:pre}, we have the following decay at infinity principle and narrow region principle for system (\ref{sys:biguv}).

\begin{Lem}[Decay at Infinity Principle]\label{thm:decay2}
	There exists a constant $R_0>0$, for all system (\ref{sys:biguv}) with $\lambda< 0$, 
	\begin{enumerate}
		\item[(a)] if there is $x^*\in \Sigma_\lambda$, $|x^*|>R_0$ such that $U_\lambda(x^*)=\min_{x\in\overline{\Sigma_{\lambda}}} U_{\lambda}(x)<0$, then 
		$$
		V_{\lambda}(x^*)<2U_{\lambda}(x^*)<0;
		$$
		\item[(b)]  if there is $y^*\in \Sigma_\lambda$, $|y^*|>R_0$ such that $V_\lambda(y^*)=\min_{y\in \overline{\Sigma_{\lambda}}} V_{\lambda}(y)<0$, then 
		$$U_{\lambda}(y^*)<2V_{\lambda}(y^*)<0.$$
	\end{enumerate}
\end{Lem}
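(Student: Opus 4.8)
The plan is to mimic closely the proof of Proposition \ref{thm:decay}, since Lemma \ref{thm:decay2} is its exact analogue for the Kelvin-transformed system (\ref{sys:biguv}) on $\mathbb{R}^n\backslash\{0^\lambda\}$. First I would assume, as in case (a), that there is $x^*\in\Sigma_\lambda$ with $|x^*|>R_0$ realizing the negative minimum of $U_\lambda$ over $\overline{\Sigma_\lambda}$; by the remarks preceding the lemma this minimum point automatically avoids the singularity $0^\lambda$, so the pointwise evaluation of the fractional Laplacian at $x^*$ is legitimate. Using the anti-symmetry of $U_\lambda$ and splitting the integral over $\Sigma_\lambda$ and its reflection, exactly as in the proof of Proposition \ref{thm:decay}, I obtain
\[
(-\Delta)^{\frac{\alpha}{2}}U_\lambda(x^*)\leq 2C_{n,\alpha}\Big(\int_{\Sigma_\lambda}\frac{dy}{|x^*-y^\lambda|^{n+\alpha}}\Big)U_\lambda(x^*),
\]
and combining this with the first equation of (\ref{sys:biguv}) gives
\[
q\,\eta(x^*,\lambda)^{q-1}V_\lambda(x^*)\leq b_1(x^*,\lambda)U_\lambda(x^*),\qquad
b_1(x^*,\lambda)=2C_{n,\alpha}\int_{\Sigma_\lambda}\frac{dy}{|x^*-y^\lambda|^{n+\alpha}},
\]
with no subtracted $\partial_u f$ term this time, so that trivially $b_1(x^*,\lambda)\geq \tfrac{2C_{n,\alpha}\omega_n}{4^{n+\alpha}}|x^*|^{-\alpha}>0$ by the same ball-comparison estimate as before.

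Next I would bound the coefficient $q\,\eta(x^*,\lambda)^{q-1}$ from above. Since $x^*\in\Sigma_\lambda^{U-}$ we have $0<\bar u_\lambda(x^*)<\bar u(x^*)$, and once $U_\lambda(x^*)<0$ forces $V_\lambda(x^*)<0$ (which the inequality above already yields, as $b_1>0$), also $0<\bar v_\lambda(x^*)<\eta(x^*,\lambda)<\bar v(x^*)$. Using the decay $\bar v(x)\sim|x|^{-(n-\alpha)}$ gives
\[
q\,\eta(x^*,\lambda)^{q-1}\leq q\,\bar v(x^*)^{q-1}\lesssim |x^*|^{-(n-\alpha)(q-1)}.
\]
Therefore
\[
\frac{b_1(x^*,\lambda)}{q\,\eta(x^*,\lambda)^{q-1}}\gtrsim |x^*|^{(n-\alpha)(q-1)-\alpha},
\]
and the exponent $(n-\alpha)(q-1)-\alpha$ is positive precisely because $q>\frac{n+\alpha}{n-\alpha}$ (equivalently $(n-\alpha)(q-1)>\alpha$). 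Hence this ratio tends to $+\infty$ as $|x^*|\to\infty$, so there is $R_0>0$ with $b_1(x^*,\lambda)/(q\,\eta(x^*,\lambda)^{q-1})>2$ for all $|x^*|>R_0$, which together with the displayed inequality yields $V_\lambda(x^*)<2U_\lambda(x^*)<0$, proving (a). Part (b) is identical with the roles of $U_\lambda,V_\lambda$ and the exponents $q,r$ interchanged, using $r>\frac{n+\alpha}{n-\alpha}$.

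The main thing to be careful about — and the only real departure from Proposition \ref{thm:decay} — is the presence of the excised point $0^\lambda$: one must confirm that the negative minimum of $U_\lambda$ (resp. $V_\lambda$) cannot be attained at $0^\lambda$ and that the integral representation of $(-\Delta)^{\alpha/2}U_\lambda$ at $x^*$ is still valid there. This is exactly what the paragraph preceding the lemma arranges, via $\lim_{x\to 0^\lambda}U_\lambda(x)=+\infty$ coming from assumption $(vi)$ and the regularity $\bar u,\bar v\in C^{1,1}_{loc}$ away from the origin together with the decay at infinity; so no genuinely new obstacle arises, only bookkeeping. I would simply remark that, apart from this point, the computation is verbatim that of Proposition \ref{thm:decay} with the cleaner coefficients $q\eta^{q-1}$ and $r\xi^{r-1}$ and $b_1$ now lacking its negative term, and omit the repeated details.
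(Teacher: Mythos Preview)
Your proposal is correct and takes exactly the approach the paper intends: it omits the proof of Lemma~\ref{thm:decay2} outright, saying it is ``similar to'' that of Proposition~\ref{thm:decay}, and your write-up is precisely that adaptation, with the simplification that the first equation of (\ref{sys:biguv}) carries no $U_\lambda$ term, so $b_1$ has no subtracted piece. One tiny quibble: the hypothesis $q>\frac{n+\alpha}{n-\alpha}$ actually gives $(n-\alpha)(q-1)>2\alpha$, not merely $>\alpha$ (the threshold for positivity of your exponent is $q>\frac{n}{n-\alpha}$), so ``precisely because'' overstates things --- but the implication you need certainly holds.
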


\begin{Lem}[Narrow Region Principle]
	\label{thm:nrp2}
	For any fixed $\lambda^*<0$. There exists $l_0>0$ such that for all $0<l\leq l_0$ and all system (\ref{sys:biguv}) with $\lambda\leq \lambda^*<0$
	\begin{itemize}
		\item[(c)]  if there is $x^*\in \Omega_{\lambda,l}:=\{x\in \Sigma_{\lambda}|\lambda-l<x_1<\lambda\}$ satisfying $U_{\lambda}(x^*)=\min_{x\in \overline{\Sigma_{\lambda}}}U_{\lambda}(x)<0$, then 
		$$
		V_{\lambda}(x^*)<2U_{\lambda}(x^*)<0;
		$$
		\item[(d)] if there is $y^*\in \Omega_{\lambda,l}=\{x\in \Sigma_{\lambda}|\lambda-l<x_1<\lambda\}$ satisfying $V_{\lambda}(y^*)=\min_{y\in \overline{\Sigma_{\lambda}}}V_{\lambda}(y)<0$, then 
		$$
		U_{\lambda}(y^*)<2V_{\lambda}(y^*)<0.
		$$
	\end{itemize}
\end{Lem}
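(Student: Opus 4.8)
The plan is to copy the proof of Proposition~\ref{thm:nrp} almost word for word; the one new feature is that the Kelvin-transformed functions $\bar u,\bar v$ blow up at the origin and hence are \emph{not} globally bounded, which is exactly why the statement freezes $\lambda^*<0$ and only allows planes with $\lambda\le\lambda^*$, keeping $\overline{\Sigma_\lambda}$ at distance $\ge|\lambda^*|>0$ from the singularity. So fix $\lambda^*<0$ and set $M:=\max\{\|\bar u\|_{L^\infty(\overline{\Sigma_{\lambda^*}})},\|\bar v\|_{L^\infty(\overline{\Sigma_{\lambda^*}})}\}$, which is finite because $\bar u,\bar v$ are continuous on the closed set $\overline{\Sigma_{\lambda^*}}$ (which avoids $0$) and decay like $|x|^{-(n-\alpha)}$ at infinity; then $\bar u,\bar v\le M$ on $\overline{\Sigma_\lambda}$ for every $\lambda\le\lambda^*$.

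To prove (c), suppose $x^*\in\Omega_{\lambda,l}$ realizes $U_\lambda(x^*)=\min_{\overline{\Sigma_\lambda}}U_\lambda<0$; since $U_\lambda(x^*)<0$ we have $x^*\ne 0^\lambda$, so (\ref{sys:biguv}) holds pointwise at $x^*$. The anti-symmetry computation used in Propositions~\ref{thm:decay} and~\ref{thm:nrp} gives
\[
q\,\eta(x^*,\lambda)^{q-1}V_\lambda(x^*)=(-\Delta)^{\frac\alpha2}U_\lambda(x^*)\le 2C_{n,\alpha}\Big(\int_{\Sigma_\lambda}\frac{dy}{|x^*-y^\lambda|^{n+\alpha}}\Big)U_\lambda(x^*)<0 .
\]
Since $\eta(x^*,\lambda)^{q-1}>0$, this already forces $V_\lambda(x^*)<0$, i.e.\ $\bar v_\lambda(x^*)<\bar v(x^*)$, so the intermediate value obeys $0<\eta(x^*,\lambda)<\bar v(x^*)\le M$, whence $\eta(x^*,\lambda)^{q-1}\le M^{q-1}$ (recall $q>\tfrac{n+\alpha}{n-\alpha}>1$). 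Dividing by $q\,\eta(x^*,\lambda)^{q-1}$ and using this bound,
\[
V_\lambda(x^*)\le\frac{2C_{n,\alpha}}{q\,M^{q-1}}\Big(\int_{\Sigma_\lambda}\frac{dy}{|x^*-y^\lambda|^{n+\alpha}}\Big)U_\lambda(x^*) .
\]
Finally, exactly as in Proposition~\ref{thm:nrp} (take $D=\{y:l<y_1-x_1^*<1,\ |y'-(x^*)'|<1\}\subseteq\mathbb{R}^n\setminus\Sigma_\lambda$, the inclusion being where $x^*\in\Omega_{\lambda,l}$ is used, and change variables), the integral is $\ge C\int_l^1 s^{-1-\alpha}\,ds\to+\infty$ as $l\to0^+$, with $C$ independent of $\lambda$ and $x^*$. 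Hence there is $l_0=l_0(\lambda^*)>0$ such that the displayed coefficient exceeds $2$ whenever $0<l\le l_0$, which yields $V_\lambda(x^*)<2U_\lambda(x^*)<0$. Part (d) is proved identically from the second equation of (\ref{sys:biguv}), with $\eta^{q-1}$ replaced by $\xi^{r-1}\le M^{r-1}$ and $q$ by $r$ ($>1$).

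The singular-integral manipulation and the change of variables producing $\int_l^1 s^{-1-\alpha}\,ds$ are taken essentially verbatim from Section~\ref{sec:pre}, so the only substantive point, and the reason $\lambda^*<0$ must be frozen, is the uniform upper bound on the intermediate values $\xi,\eta$: it degenerates as $\lambda\uparrow0$, since then the reflected point approaches the origin where $\bar u_\lambda,\bar v_\lambda$ are unbounded. This is harmless for the overall scheme because in the moving-plane argument the narrow region principle is only invoked with a fixed $\lambda^*<0$, just as in the proof of Theorem~\ref{thm:main}.
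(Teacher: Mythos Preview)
Your proof is correct and follows exactly the route the paper intends: the paper omits the argument entirely, saying only that it is ``similar to [that] of Proposition~\ref{thm:nrp},'' and your write-up is a faithful and complete execution of that plan, including the one new point---bounding the coefficients $q\eta^{q-1}$ and $r\xi^{r-1}$ uniformly by exploiting $\lambda\le\lambda^*<0$. One small inaccuracy in your closing commentary: the bound you actually use is $\eta(x^*,\lambda)<\bar v(x^*)\le M$ with $x^*\in\Sigma_\lambda\subset\overline{\Sigma_{\lambda^*}}$, so what degenerates as $\lambda\uparrow 0$ is the control on $\bar v$ at the \emph{unreflected} point $x^*$ (which can then approach the origin), not at the reflected point; this does not affect the proof itself.
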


The proofs of Lemma \ref{thm:decay2} and \ref{thm:nrp2} are similar to those of Proposition \ref{thm:decay} and \ref{thm:nrp}. So we omit them.

By using Lemma \ref{thm:decay2} instead of Proposition \ref{thm:decay}, Step 1 is the same as in Section \ref{sec:pfthmmain}, that is we can show that there is $\lambda^*<-R_0$ such that
$$
U_{\lambda}(x)\geq 0, \quad V_{\lambda}(x)\geq 0, \forall x\in \Sigma_{\lambda}\backslash\{0^\lambda\},
$$
for all $\lambda\leq\lambda^*$.

Let 
$$
\lambda_0=\sup\{\lambda\leq 0|U_{\mu}(x)\geq 0, V_{\mu}(x)\geq 0, \forall x\in \Sigma_{\lambda}\backslash\{0^\lambda\}, \forall \mu\leq \lambda \}.
$$

\begin{Claim}\label{cla:lam0}
	$\lambda_0=0.$
\end{Claim}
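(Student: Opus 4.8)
The plan is to argue by contradiction: suppose $\lambda_0 < 0$ and derive that $\bar u$ is periodic in the $x_1$ direction, which contradicts the decay $\bar u(x)\sim|x|^{-(n-\alpha)}$ at infinity. This is the standard mechanism by which one rules out the limiting plane being strictly negative in the Kelvin-transformed (Hénon) setting, where one cannot simply run the moving-plane argument all the way from both sides.

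\textbf{Step 1: Reduce to the dichotomy at $\lambda_0$.} By continuity in $\lambda$, $U_{\lambda_0}\geq 0$ and $V_{\lambda_0}\geq 0$ on $\Sigma_{\lambda_0}\setminus\{0^{\lambda_0}\}$. Exactly as in Claims \ref{claim:a} and \ref{claim:b} of Section \ref{sec:pfthmmain} (whose proofs only use assumption $(iv)$-type positivity, here furnished by $q\eta^{q-1}>0$, $r\xi^{r-1}>0$ on $\Sigma_\lambda\setminus\{0^\lambda\}$), either $U_{\lambda_0}\equiv V_{\lambda_0}\equiv 0$, or both $U_{\lambda_0}>0$ and $V_{\lambda_0}>0$ throughout $\Sigma_{\lambda_0}\setminus\{0^{\lambda_0}\}$. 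First I would dispose of the case $U_{\lambda_0}\equiv V_{\lambda_0}\equiv 0$: this would say $\bar u(x)=\bar u(x^{\lambda_0})$, i.e. $\bar u$ is symmetric about $T_{\lambda_0}$; but then $\bar u$ is finite near $0^{\lambda_0}$ while $\bar u$ blows up at $0$ by the Kelvin transform (using $(vi)$), and $0^{\lambda_0}\neq 0$ since $\lambda_0<0$ — contradiction. Hence in the remaining case $U_{\lambda_0},V_{\lambda_0}>0$ strictly on $\Sigma_{\lambda_0}\setminus\{0^{\lambda_0}\}$.

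\textbf{Step 2: Push the plane past $\lambda_0$ and extract a contradiction via Case 1 / Case 2.} Since $\lambda_0<0$ is the supremum, there are $\lambda_k\downarrow\lambda_0$ with $\lambda_k<0$ and points $x^k\in\Sigma_{\lambda_k}$ realizing a negative minimum of $U_{\lambda_k}$ (or $V_{\lambda_k}$) over $\overline{\Sigma_{\lambda_k}}$; such a minimizer exists and avoids $0^{\lambda_k}$ because $U_{\lambda_k}\to+\infty$ at $0^{\lambda_k}$ and $\to 0$ at infinity. Now I would run the identical trichotomy as in Step 2 of Section \ref{sec:pfthmmain}: if $\{x^k\}$ has a bounded subsequence, its limit $x^*$ satisfies $U_{\lambda_0}(x^*)=0$ with $x^*\neq 0^{\lambda_0}$, forcing $x^*\in T_{\lambda_0}$, and then Lemma \ref{thm:nrp2} (applicable since $\lambda_k\leq \lambda_0+l_0/2<0$ for $k$ large, so the hypothesis $\lambda\leq\lambda^*<0$ is met) gives $V_{\lambda_k}(x^k)<2U_{\lambda_k}(x^k)<0$; if $|x^k|\to\infty$, Lemma \ref{thm:decay2} gives the same inequality. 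In either branch one then locates minimizers $y^k$ of $V_{\lambda_k}$ and repeats the bounded/unbounded split, applying Lemma \ref{thm:nrp2} or \ref{thm:decay2} to obtain $U_{\lambda_k}(y^k)<2V_{\lambda_k}(y^k)<0$; chaining the two inequalities as in Section \ref{sec:pfthmmain} yields $U_{\lambda_k}(y^k)<4U_{\lambda_k}(y^k)$ with $U_{\lambda_k}(y^k)<0$, i.e. $1>4$, a contradiction. The only genuinely new point relative to Section \ref{sec:pfthmmain} is checking that all minimizers stay away from the singular point $0^{\lambda}$, which is exactly what the blow-up statement $\lim_{x\to 0^\lambda}U_\lambda(x)=+\infty$ from $(vi)$ guarantees.

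\textbf{Main obstacle.} The delicate step is Case 1 when the limit $x^*$ could a priori be the singular point $0^{\lambda_0}$ rather than on $T_{\lambda_0}$; ruling this out requires the precise asymptotics of the Kelvin transform near $0$ together with $(vi)$, ensuring $U_{\lambda_0}$ is $+\infty$ (not $0$) at $0^{\lambda_0}$, so a minimizing sequence cannot converge there. A secondary technical point is verifying that Lemma \ref{thm:nrp2}'s standing hypothesis $\lambda\leq\lambda^*<0$ is compatible with the narrow-region width $l_0$ chosen uniformly for all such $\lambda$; since $\lambda_0<0$ we may fix $\lambda^*=\lambda_0/2<0$ and take $k$ large so that $\lambda_k<\lambda^*$, which is legitimate. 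Once these points are secured, the argument closes and $\lambda_0=0$ follows.
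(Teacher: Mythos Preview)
Your Steps 1 and 2 are correct and track the paper's proof closely: rule out $U_{\lambda_0}\equiv 0$ using the blow-up of $\bar u$ at the origin (equivalently, $\lim_{x\to 0^{\lambda_0}}U_{\lambda_0}(x)=+\infty$), then rule out $U_{\lambda_0}>0$ by the bounded/unbounded case split with Lemmas \ref{thm:decay2} and \ref{thm:nrp2}, exactly as in Step 2 of Section \ref{sec:pfthmmain}. Your attention to the two genuinely new points --- that minimizers and their limits avoid $0^{\lambda_k}$ and $0^{\lambda_0}$, and that Lemma \ref{thm:nrp2} needs a fixed $\lambda^*<0$ chosen before $l_0$ --- is on target and slightly more explicit than the paper, which simply says the remainder is ``analogous''.

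One correction: your opening paragraph misdescribes the mechanism. You never derive that $\bar u$ is periodic, nor does the paper; the claim that ``one cannot simply run the moving-plane argument all the way from both sides'' in the Kelvin-transformed H\'enon setting is false --- the paper does exactly that after establishing $\lambda_0=0$. The periodicity route is the device used when there is \emph{no} distinguished singular point (as in Theorem \ref{thm:main}), where one allows the possibility $U_{\lambda_0}\equiv 0$ and then matches with a plane coming from the right. Here the singularity at the origin pins the symmetry hyperplane, so the correct contradiction (which you in fact implement) comes from the blow-up at $0^{\lambda_0}$, not from periodicity. Drop that opening paragraph and your write-up is sound.
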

\begin{proof}[Proof of Claim \ref{cla:lam0}]
	If the statement was false, then $\lambda_0<0$. 
	
	We know from continuity that $U_{\lambda_0}(x)\geq 0$ and $V_{\lambda_0}(x)\geq 0$ for all $x\in \Sigma_{\lambda}\backslash\{0^\lambda\}$.
	Since $\lim_{x\to 0^{\lambda_0}}U_{\lambda_0}(x)=+\infty$ and $\lim_{|x|\to \infty}U_{\lambda_0}(x)=0$, $U_{\lambda_0}(x)\not\equiv 0$. 
	
	Actually, we have $U_{\lambda_0}(x)>0$, for all $x\in \Sigma_{\lambda}\backslash\{0^\lambda\}$. If the assertion would not hold, then there is $x^*\in \Sigma_{\lambda}\backslash\{0^\lambda\}$ such that $U_{\lambda_0}(x^*)=0$. By
	using a similar computation as in (\ref{est:minu}), we have
	\begin{equation}
	\label{ineq:negu}
	(-\Delta)^{\frac{\alpha}{2}}U_{\lambda_0}(x^*)<0.
	\end{equation}
	While system (\ref{sys:biguv}) suggests that 
	$$
	(-\Delta)^{\frac{\alpha}{2}}U_{\lambda_0}(x^*)=q\eta(x^*,\lambda_0)^{q-1}V_{\lambda_0}(x^*)\geq 0,
	$$
	which contradicts to (\ref{ineq:negu}). Therefore, 
	$$
	U_{\lambda_0}(x)>0, \forall x\in \Sigma_{\lambda}\backslash\{0^\lambda\}
	$$
	Same argument implies $V_{\lambda_0}(x)>0,  \forall x\in \Sigma_{\lambda}\backslash\{0^\lambda\}$.

From the definition of $\lambda_0$ ($<0$), there exist sequences $\{\lambda_k\}_{k=1}^\infty$ and $\{x^k\}_{k=1}^\infty$ satisfying
\begin{equation*}
%\label{def:lamk}
\lambda_0<\lambda_{k+1}<\lambda_k<0, k=1,2,\dots;\quad \lim_{k\to \infty}\lambda_k=\lambda_0;
\end{equation*}
$
x^k\in \Sigma_{\lambda_k}$, and either $U_{\lambda_k}(x^k)<0$ or $V_{\lambda_k}(x^k)<0$.
The remainder of the argument is analogous to that of Step 2 in Section \ref{sec:pfthmmain}, that is by using Lemma \ref{thm:decay2} and \ref{thm:nrp2}, we will find that all possible cases lead to contradictions.
\end{proof}

Since $\lambda_0=0$. By moving the planes from $x_1=+\infty$, we derive that $\bar u$ and $\bar v$ are symmetric about the hyperplane $x_1=0$. Since $x_1$ direction can be chosen arbitrarily, we know that $\bar u$ and $\bar v$ are symmetric about the origin, and so do $u$ and $v$. 

\end{proof}


\begin{thebibliography}{99}
	
	
\bibitem{BN1991}
H. Berestycki, L. Nirenberg, On the method of moving planes and the sliding method, Bol. Soc. Brasi. Mat. 22(1991)1-37.
	
	


\bibitem{BG1990}
J.P. Bouchard, A. Georges, Anomalous diffusion in disordered media, Statistical mechanics, models and physical applications, Physics reports, 195(1990)127-293.


\bibitem{BCPS2013}
C. Brandle, E. Colorado, A. de Pablo, U. Sanchez, A concave-convex elliptic problem involving the fractional Laplacian, Proc Royal Soc. of Edinburgh, 143(2013)39-71.

\bibitem{BS2000}
J. Busca, B. Sirakov, Symmetry results for semilinear elliptic
systems in the whole space, J. Differential Equations, 163, 2000,
41-56.


\bibitem{CT2010}
X. Cabre, J. Tan, Positive solutions of nonlinear problems involving the square root of Laplacian, Advances in Mathematics, 224(2010)2052-2093.




\bibitem{CS2007}
L. Caffarelli, L. Silvestre, An extension problem related to the fractonal Laplacian, Comm. PDE., 32(2007)1245-1260.


\bibitem{CV2010}
L. Caffarelli, L. Vasseur, Drift diffusion equations with fractional diffusion and the quasi-geostrophic equation, Ann. Math., 171(2010) 1903-1930.


\bibitem{CLO2005cpde}
W.X. Chen, C.M. Li, B. Ou, Classification of solutions for a system of integral equations. Commun Partial Differ Equ, 2005, 30: 59–65

\bibitem{CLO2005dcds}
W.X. Chen, C.M. Li, B. Ou, Qualitative properties of solutions for an integral equation. Disc  Cont Dynamics Sys, 2005, 12: 347–354

\bibitem{CLO2006}
W.X. Chen, C.M. Li, B. Ou, Classification of solutions for an integral equation. Commun Pure Appl Math, 2006, 59: 330–343


\bibitem{CL2009}
W.X. Chen, C.M. Li, Classification of positive solutions for nonlinear differential and integral systems with critical exponents, Acta Mathematica Scientia,  29B(2009)949-960.

\bibitem{CL2010}
W.X. Chen, C.M. Li, Methods on nonlinear elliptic equations, AIMS Book Series on Diff. Equa. Dyn. Sys., 2010.


\bibitem{CZ2014}
W.X. Chen, J.Y. Zhu, Indefinite fractional elliptic problem and Liouville theorems,
Journal of Differential Equations, 260(5)(2016) 4758-4785. 


\bibitem{CLL14}
W.X. Chen, C.M. Li, Y. Li, A direct method of moving planes for the fractional Laplacian, arXiv:1411.1697 (2014).



\bibitem{Con2006}
P. Constantin, Euler equations, Navier-Stokes equations and turbulence, in Mathematical foundation of turbulent viscous flows, Lecture Notes in Math. 1871, Springer-Verlag, New York, (2006) 1–43. 

\bibitem{DZ2015}
J.B. Dou, H.Y. Zhou, Liouville theorems for fractional H\'enon equation and system on $\mathbb{R}^n$, Comm. Pure Appl. Anal., 14(5)(2015) 1915-1927.

\bibitem{FW2012}
M. Fall, T. Weth, Nonexistence results for a class of fractional elliptic boundary value problems, J. Func. Anal. 263(2012)2205-2227.

\bibitem{FQT2012}
P. Felmer, A. Quass, J.G. Tan, Positive solutions of the nonlinear Schr\"odinger equation with the fractional Laplacian, Proc. Roy. Soc. Edinburgh, 142A (2012) 1237-1262.


\bibitem{FW2014CCM}
P. Felmer, Y. Wang, Radial symmetry of positive solutions to equations involving the fractional laplacian, Communications in Contemporary Mathematics, 16(01)(2014) 1350023.



\bibitem{FF1994}
D.G. de Figueiredo, P.L. Felmer, A Liouville-type theorem for elliptic systems, Ann. Sc. Norm. Super. Pisa Cl. Sci. (4) 21 (1994) 387–397.



\bibitem{FJL} 
J. Frohlich, B. Jonsson, E. Lenzmann, Effective dynamics for boson stars, Nonlinearity, 20(2007)1031-1075.



\bibitem{GNN1979}
B. Gidas, Wei-Ming Ni, L. Nirenberg, Symmetry and related properties via the maximum principle, Commun. Math. Phys. 68 (1979) 209-243.


\bibitem{JW2016}
S. Jarohs, T. Weth, Symmetry via antisymmetric maximum principles in nonlocal problems ov variable order, Annali di Matematica Pura ed Applicata (1923 -), 195(1) (2016) 273-291.



\bibitem{JL2006}
C. Jin, C. Li, Symmetry of solutions to some integral equations. Proc Amer Math Soc, 2006, 134: 1661–1670




\bibitem{LM2008}
C. Li, L. Ma, Uniqueness of positive bound states to Schrödinger systems with critical exponents, SIAM Journal on Mathematical Analysis,  40(3)(2008) 1049-1057.

\bibitem{Li2004}
Y.Y. Li, Remark on some conformally invariant integral equations: the method of moving spheres, J. Eur. Math. Soc. 6(2004), 153-180.

\bibitem{LM2012}
B.Y. Liu, L. Ma, Symmetry results for decay solutions of semilinear elliptic systems on half spaces, Nonlinear Analysis TMA, 75 (2012) 3167–3177.

\bibitem{LM2013}
B.Y. Liu, L. Ma, Symmetry results for elliptic Schrödinger systems on half spaces, J. Math. Anal. Appl. 401 (2013) 259–268.

\bibitem{MC06}
L. Ma, D.Z. Chen, A Liouville type theorem for an integral system, Communications on Pure and Applied Analysis 5, no. 4 (2006) 855.


\bibitem{ML2010}
L. Ma, B.Y. Liu, Symmetry results for decay solutions of elliptic systems in the whole space, Adv. Math. 225 (2010) 3052-3063.



\bibitem{QX2015}
A. Quaas, A. Xia, A Liouville type theorem for Lane-Emden systems involving the fractional Laplacian, arXiv.1511.0234.



\bibitem{Sil2007}
L. Silvestre, Regularity of the obstacle problem for a fractional power of the Laplace operator, Comm. Pure Appl. Math., 60(2007)67-112.


\bibitem{Yu2013}
X.H. Yu, Liouville type theorems for integral equations and integral systems, Calc. Var.46(2013)75-95.

\bibitem{ZCCY2016}
R. Zhuo, W.X. Chen, X.W. Cui, Z.X. Yuan, Symmetry and non-existence of solutions for a nonlinear system involving the fractional Laplacian, Dis.Con.Dyn.Sys. 36(2016)1125-1141.



\end{thebibliography}
\end{document}